\newtheorem{theorem}{Theorem}[section]
\newtheorem{lemma}[theorem]{Lemma}
\newtheorem{proposition}[theorem]{Proposition}
\newtheorem{definition}[theorem]{Definition}
\newtheorem{example}[theorem]{Example}
\newtheorem{remark}[theorem]{Remark}
\newcommand\Z{\mathbb{Z}}
\title[Higher Arity Self-Distributivity and Cohomology]{Higher Arity Self-Distributive Operations in Cascades  and  their Cohomology}
\author[Elhamdadi]{Mohamed Elhamdadi} 
\address{Department of Mathematics, 
	University of South Florida, Tampa, FL 33620, U.S.A.} 
\email{emohamed@math.usf.edu} 
\author[Saito]{Masahico Saito} 
\address{Department of Mathematics, 
	University of South Florida, Tampa, FL 33620, U.S.A.} 
\email{saito@usf.edu} 
\author[Zappala]{Emanuele Zappala} 
\address{Department of Mathematics, 
	University of South Florida, Tampa, FL 33620, U.S.A.} 
\email{zae@mail.usf.edu}
\begin{document}

\maketitle

\begin{abstract}
We investigate constructions of higher arity self-distributive operations,
and give relations between cohomology groups corresponding to operations of different arities. 
For this purpose we introduce the notion of mutually distributive $n$-ary operations
generalizing those for the binary case, and define a cohomology theory labeled by these operations.
A geometric interpretation in terms of framed links is described, with the scope of providing algebraic background 
of constructing $2$-cocycles for framed link invariants.
This theory is also studied in the context of symmetric monoidal categories.
Examples from Lie algebras, coalgebras and Hopf algebras are given.
\end{abstract}

\tableofcontents

\section{Introduction}

Self-distributivity of binary operations and its cohomology theories, motivated by knot theory, have been studied extensively in recent years, see for example \cite{CES, CEGS,CJKLS, CKS,EN,NosakaBook}. In particular, in \cite{CJKLS} the authors have introduced a (co)homology theory of quandles  and utilized the $2$-cocycles with abelian coefficients to define a link invariant, called {\it cocycle invariant}. In \cite{CEGS}, it has been given a non-abelian version of these results, and it has been shown that the cocycle invariant is a {\it quantum} invariant, based on previous work of Gra\~na, see \cite{Gra}. Mutually distributive binary operations were investigated in \cite{Jozef} in which the author proposed a general framework to study homology of distributive structures.  Ternary self-distributivity, which is a natural generalization of the binary case, and its cohomology theory have been studied in \cite{EGM,Maciej}. In this article we consider higher arity self-distributivity and its cohomology, which provide an algebraic background suitable to the definition of  $2$-cocycle framed link invariants. 
 
We generalize the notion of mutual distributive operations given in \cite{Jozef} to $n$-ary operations.  
We show that composing mutually distributive operations results in new higher arity self-distributive operations 
(Proposition~\ref{prop:TmTn}).
Specifically,  for  $m$-ary and $n$-ary  self-distributive operations  $W_m$ and $W_n$ on $X$, 
respectively, under the condition called   {\it mutual distributivity} defined in Section~\ref{sec:nary}, 
the composition   $W: X^{m+n-1} \rightarrow X$ defined  by 
$$
W( x , {\bf y}, {\bf z}) = W_n ( W_m (x, {\bf y}) , {\bf z} ) 
$$
is shown to be
an $(m+n-1)$-ary self-distributive operation. This procedure, in the specific, although important, case of mutually distributive binary operations, is particularly proficuous to describe colorings of framed link diagrams by means of ternary operations, paving the way for the possibility of introducing an analogue of the cocycle invariant in the context of framed links. We defer the study of such an invariant to a subsequent work.

We generalize both the $n$-ary distributive homology \cite{EGM} and homology of distributive sets \cite{Jozef}
to mutually distributive sets of general $n$-ary operations (Definition~\ref{def:labchain}).
The relation between this chain complex and the $n$-ary operations that result from mutually distributive sets
by composition as in Definition~\ref{LDchainmaps},
is  given in the form of a chain map. 

We also present  constructions, called {\it doubling}, for mutually distributive sets, 
that are similar to the composition 
introduced
in Section~\ref{sec:nary} but defined on the product $X \times X$.
A geometric interpretation,
uses
 parallel strings 
and allows us to relate colorings of diagrams of links and colorings of diagrams of framed links. The cohomological counterpart of this procedure, also described in this paper, is expected to provide the algebraic context for the definition of a framed link cocycle invariant.
The relation between the doubling and the composition in Section~\ref{sec:nary}, 
as well as its implications to cohomology are discussed.

Higher arity self-distributivity is also investigated in the context of symmetric monoidal categories.  
We define the notion of $n$-ary self-distributive object in a symmetric monoidal category, providing therefore a higher arity version of the work in \cite{CCES}. In particular, we show how to produce ternary self-distributive objects in the category of vector spaces, starting from binary self-distributive objects (Theorem~\ref{co-dbl}). 
 Specifically, let $(X,\Delta)$ be a comonoid in a (strict) symmetric monoidal category $\mathcal{C}$ (e.g. a coalgebra in the category of vector spaces). Let $q:X\boxtimes X\longrightarrow X$ be a morphism such that $(X,q)$ is a binary self-distributive object in $\mathcal{C}$. Then the pair $(X, T)$, where $T = q(q\boxtimes \mathbbm{1})$, defines a ternary self-distributive object in $\mathcal{C}$. The construction defines a functor $\mathcal{F}:\mathcal{BSD}\rightarrow \mathcal{TSD}$, from the category of binary self-distributive objects in $\mathcal{C}$, to the category of ternary self-distributive objects in $\mathcal{C}$.
This procedure of internalization of higher order self-distributivity indeed produce interesting examples of self-distributive objects among coalgebras.  Examples from Lie algebras, coalgebras and Hopf algebras are given. 
As in \cite{CCES} this categorical version is expected to be related to the Yang-Baxter equation in tensors of vector spaces through Hopf algebras. 

The article is organized in the following manner. 
Section \ref{sec:pre} gives the basics of binary and ternary racks with examples.  In Section \ref{sec:nary} we introduce the higher arity case and 
show that composing mutually distributive operations results in new higher arity self-distributive operations.
In Section~\ref{sec:labeledhom} we define a cohomology theory that generalizes those given in \cite{EGM} and  \cite{Jozef} to mutually distributive operations in various arities. 
A chain map that relates this chain complex and  the one for the operation resulted from composition 
is given in Section \ref{sec:chain-map}. 
 In Section \ref{sec:dble} we introduce functors (which we call doubling) in  the binary and ternary mutually distributive rack categories.
Section~\ref{sec:TtoR} shows the passages between 
 binary mutually distributive racks and 
 ternary self-distributive racks. 
We exhibit a direct construction of ternary cocycles from binary cocycles.  
We close the circle of functors relating binary and ternary operations by introducing a construction that brings back from ternary to binary. 
The relations among these functors are also discussed 
and 
a geometric interpretation is given.  Section \ref{Inter} is devoted to the development of a 
categorical point of view of $n$-ary self-distributivity, 
extending the previous results of \cite{CCES}. We define self-distributive objects in symmetric monoidal categories and construct examples in the category of vector spaces. 
We describe a procedure to obtain higher order self-distributive operations from Lie algebras.  Appendix~\ref{Ap:Lie} deals with some detailed computations related
to Lie algebras. In Appendix~\ref{augmented} we introduce a higher order analogue of augmented rack that enables us to produce
Hopf algebra versions of group theoretic examples, such as the heap operation.

\section{Preliminary}\label{sec:pre}

\subsection{Basics of Racks}
We review, for the convenience of the reader, some basic definitions of shelves, racks and quandles and give a few examples. This material can be found, for example, in \cite{Jozef,CKS,FR, EN,NosakaBook}.
\begin{definition}
{\rm

A {\it shelf }  $X$ is a set with a binary operation $(a, b) \mapsto a
* b$ such that
for any $a,b,c \in X$, we have
$ (a * b) * c=(a* c)* (b* c). $

If, in addition,  the maps $R_y: x \mapsto x*y$ are bijections of $X$, for all $y\in X$, then 
$(X, *)$ is called a {\it rack}.

A {\it quandle}  is an idempotent ($a* a =a,$  for all $a \in X$) rack. 

%
%
%

}
\end{definition}


\begin{example}
{\rm
The following are typical examples of quandles: 
\begin{itemize}
	\item 

A group $G$ with
conjugation as operation: $a * b = b^{-1} a b$,
denoted by $X=$ Conj$(G)$, is a quandle. 

\item
A group $G$ with
the operation $a*b=ba^{-1}b $ is a quandle called the {\it core} quandle.


\item
Any $\Lambda (={\Z }[t, t^{-1}])$-module $M$ is
a quandle with $a* b=ta+(1-t)b$, for $a,b \in M$, and is called
an {\it  Alexander  quandle}. 

\item 

For any group $G$, and an automorphism $f\in Aut(G)$, the operation $x*y = f(xy^{-1})y$ defines a quandle structure on $G$, usually referred to as {\it generalized Alexander quandle}. 
\end{itemize}
}
\end{example}

A rack {\it homomorphism} $f: (X, *) \rightarrow (X', *')$  is a map satisfying 
$f(x*y)=f(x)*' f(y)$ for all $x, y \in X$.
The category of racks is denoted by ${\mathcal R}$.

 Let $(X,*)$ be a  rack and $A$ be an abelian group.  A function $\phi: X \times X \rightarrow A$ is said to be a (rack) $2$-\textit{cocycle} 
 if for all $x,y,z \in X$, the following holds
$$
\phi(x,y)+\phi(x*y,z)=\phi(x,z)+ \phi(x*z,y*z).
$$

\begin{lemma}[\cite{CENS}] \label{lem:abext}
Let $(X, *)$ be a rack, $A$ be an abelian group, and $\phi: X \times X \rightarrow A$ be a $2$-cocycle.
Define an operation $*$ on $X \times A$ by 
$$
(x, a) * (y, b) := (x*y , a + \phi(x,y) )
.$$
Then $(X \times A, *)$ is a rack.
\end{lemma}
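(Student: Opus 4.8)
The plan is to verify directly that the operation defined on $X \times A$ satisfies the two defining axioms of a rack: the shelf (self-distributivity) identity and the bijectivity of all right translations.

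First I would check self-distributivity by computing both sides of
$$((x,a)*(y,b))*(z,c) = ((x,a)*(z,c))*((y,b)*(z,c))$$
for arbitrary $(x,a),(y,b),(z,c) \in X\times A$. Expanding the left-hand side gives $((x*y)*z,\ a+\phi(x,y)+\phi(x*y,z))$, while the right-hand side gives $((x*z)*(y*z),\ a+\phi(x,z)+\phi(x*z,y*z))$. The first coordinates agree because $(X,*)$ is itself a rack (hence a shelf), so $(x*y)*z=(x*z)*(y*z)$. The equality of the second coordinates is exactly the $2$-cocycle condition $\phi(x,y)+\phi(x*y,z)=\phi(x,z)+\phi(x*z,y*z)$. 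This is the conceptual core of the statement: the cocycle identity is precisely the obstruction to self-distributivity in the $A$-coordinate.

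Next I would establish that each right translation $R_{(y,b)}\colon (x,a)\mapsto (x*y,\ a+\phi(x,y))$ is a bijection of $X\times A$. Since $R_y\colon x\mapsto x*y$ is a bijection of $X$ by the rack axiom, I would construct the inverse explicitly: given $(x',a')$, set $x=R_y^{-1}(x')$ and $a=a'-\phi(x,y)$, and check that this is the unique preimage. The point to be careful about --- and the only real subtlety here --- is that $R_{(y,b)}$ is not literally a product of two bijections, because the shift applied to the $A$-coordinate depends on the first coordinate $x$; it is a \emph{shear} over $R_y$. Nonetheless, composing the inverse formula with $R_{(y,b)}$ in either order recovers the identity, so bijectivity holds. (Note also that $b$ does not appear in the formula, so no hypothesis on $A$ beyond invertibility of translation is needed.)

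Together these two verifications show $(X\times A,*)$ is a rack. I expect the main obstacle to be bookkeeping rather than conceptual: matching the second coordinates in the self-distributivity check is immediate once the cocycle condition is written down, and the bijectivity argument only requires recognizing the shear structure and inverting the first coordinate via $R_y^{-1}$.
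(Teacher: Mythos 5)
Your proof is correct and follows the standard direct verification: the paper itself cites \cite{CENS} for this lemma rather than proving it, but its proof of the ternary analogue (Lemma~\ref{lem:ternary-abext} in Appendix~\ref{app:A}) proceeds by exactly the same computation, with the cocycle condition appearing as the equality of the $A$-coordinates in the self-distributivity check. Your additional care with the bijectivity of $R_{(y,b)}$ as a shear over $R_y$ is a correct and complete treatment of the rack axiom that the paper leaves implicit.
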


Rack and quandle 2-cocycles have been constructed from extensions \cite{CENS}, 
polynomial expressions \cite{AmeSai,Mochi}, determinants \cite{Nosaka}, and computer calculations \cite{Vend}.

	\subsection{Ternary distributive structures}\label{subsec:ternary}
	
	Ternary racks and quandles were investigated in \cite{EGM,Green,Maciej} and generalized further in \cite{CEGM}.  Here we review the basics of ternary racks and give some examples.
	
\begin{definition}\label{ternarydef}  {\rm 

		Let $(X,T)$ be a set equipped with a ternary operation $T: X\times X \times X \rightarrow X$.  The operation  $T$ is said to be {\it (right) distributive} if it satisfies the following condition
		for all  $ x,y,z,u,v \in X $,
		$$
		T(T(x,y,z),u,v)=T(T(x,u,v),T(y,u,v),T(z,u,v)). 
		$$ 
	}
\end{definition}

In this paper we will consider distributivity from the right. 
%
%

\begin{definition}\label{ternaryquandledef}  {\rm 
		Let $T: X \times X \times X \rightarrow X $ be a  ternary distributive operation on a set $X$. 
		If for all $a,b \in X$, the map $R_{a,b}:X \rightarrow X$ given by $R_{a,b}(x)=T(x,a,b)$ is invertible,  
		then $(X,T)$ is said to be \emph{ternary rack}.  

	}
\end{definition}

\begin{example}\label{ex:ternary}
{\rm

	The following constructions are found in \cite{EGM}.
\begin{itemize}
\item
	Let $(X,*)$ be a rack and define a ternary operation on $X$ by $T(x,y,z)=(x * y)* z$, for all $x,y,z \in X$.  It is straightforward to see that $(X, T)$ is a ternary rack.  Note that in this case $R_{a,b}=R_b \circ R_a$. We will say that this ternary rack is induced by a (binary) rack.
	
In particular, 	if $(X,*)$ is an Alexander quandle with $x*y=tx+(1-t)y$, then the ternary rack coming from $X$ has the operation $$T(x,y,z)=t^2x+t(1-t)y+(1-t)z. $$

\begin{sloppypar}
\item
	Let $M$ be any ${\Lambda}$-module where
	${\Lambda}=\mathbb{Z}[t^{\pm 1},s]$. The operation $T(x,y,z)=tx+sy+(1-t-s)z$ defines a ternary rack structure on $M$.  We call this an {\it affine} ternary rack.
	
In particular, 	consider $\Z_8$ with the ternary operation $T(x,y,z)=3x+2y+4z$.  This affine ternary rack given in \cite{EGM}  is not induced by an Alexander quandle structure as described in the preceding item since $3$ is not a square in $\Z_8$.
\end{sloppypar}

\item
	Any group $G$ with the ternary operation $T(x,y,z)=xy^{-1}z$  gives a ternary rack.  This operation is well known and called a {\it heap} (sometimes also called a groud) of the group $G$.
\end{itemize}

}
\end{example}

A morphism of ternary racks is a map $f: ( X, T) \rightarrow (X',T')$ such that $$f(T(x,y,z))=T'(f(x),f(y),f(z)).$$  A bijective ternary rack endomorphism is called ternary rack automorphism.
We denote by ${\mathcal T}$ the category of ternary racks.

Let $(X, T)$ be a ternary rack and $A$ be an abelian group.  A function $\psi: X \times X \times X \rightarrow A$ is said to be a \textit{ternary} $2$-\textit{cocycle} 
 if for all $x,y,z, u, v \in X$, the following hold
\begin{eqnarray*}  
\lefteqn{ \psi(x, y, z) + \psi(T(x, y, z), u, v) }\\
& = & \psi(x, u, v) 
	+  \psi(T(x, u, v), T(y, u, v), T(z, u, v)).
\end{eqnarray*}
This equation is motivated by the following lemma, which is verified by calculations.
\begin{lemma}\label{lem:ternary-abext}
Let $(X, T)$ be a ternary rack and $A$ be an abelian group.  Let $\phi: X \times X \times X \rightarrow A$ be a map.
	The set $X \times A$ with the ternary operation given by 
	$$T((x,a), (y,b),(z,c))=(T(x,y,z), a+ \psi(x,y,z))$$
	 is a ternary rack if and only if the map $\phi$ satisfies the following ternary $2$-cocycle condition
	\begin{eqnarray*}  
	\lefteqn{ \phi(x, y, z) + \phi(T(x, y, z), u, v)}\\
	&  = & \phi(x, u, v) 
	+  \phi(T(x, u, v), T(y, u, v), T(z, u, v)).
	\end{eqnarray*}
\end{lemma}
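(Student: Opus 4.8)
The plan is to verify directly the two defining properties of a ternary rack for the pair $(X \times A, T)$, namely right distributivity of the extended operation and invertibility of its translation maps, and then to observe that the first of these is precisely equivalent to the stated cocycle condition while the second holds automatically. Throughout I will treat the map appearing in the second coordinate of the operation as $\phi$, so that $T((x,a),(y,b),(z,c)) = (T(x,y,z),\, a + \phi(x,y,z))$.

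First I would expand both sides of the distributivity axiom for the extended operation. Iterating the definition, the left-hand side $T(T((x,a),(y,b),(z,c)),(u,d),(v,e))$ becomes
$$\bigl(T(T(x,y,z),u,v),\ a + \phi(x,y,z) + \phi(T(x,y,z),u,v)\bigr),$$
while the right-hand side, after first computing the three inner triples $T((x,a),(u,d),(v,e))$, $T((y,b),(u,d),(v,e))$, $T((z,c),(u,d),(v,e))$, becomes
$$\bigl(T(T(x,u,v),T(y,u,v),T(z,u,v)),\ a + \phi(x,u,v) + \phi(T(x,u,v),T(y,u,v),T(z,u,v))\bigr).$$
The $X$-components of the two expressions coincide because $(X,T)$ is itself ternary distributive. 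Comparing the $A$-components and cancelling the common summand $a$ using the group structure of $A$ yields exactly the ternary $2$-cocycle condition. Thus the extended operation is right distributive if and only if $\phi$ is a cocycle.

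Next I would treat invertibility. For fixed $(a,\alpha),(b,\beta) \in X \times A$ the translation map sends $(x,c)$ to $(T(x,a,b),\, c + \phi(x,a,b))$. Since $(X,T)$ is already a ternary rack, $R_{a,b}\colon x \mapsto T(x,a,b)$ is a bijection of $X$, so one can define a candidate inverse on $X \times A$ by $(x',c') \mapsto (R_{a,b}^{-1}(x'),\ c' - \phi(R_{a,b}^{-1}(x'),a,b))$; a short check shows this is a two-sided inverse. Hence invertibility of the extended translation maps is automatic and imposes no further constraint on $\phi$.

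Combining the two steps gives the equivalence: $(X \times A, T)$ is a ternary rack precisely when the distributivity holds, invertibility being free, and this happens exactly when $\phi$ satisfies the ternary $2$-cocycle condition. I do not expect a genuine obstacle here; the only points requiring care are the bookkeeping in the two-sided expansion of the distributivity axiom together with the recognition that the common term $a$ cancels, and the observation that invertibility is not an additional hypothesis on $\phi$ but follows from $(X,T)$ being a ternary rack.
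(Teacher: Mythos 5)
Your proposal is correct and follows essentially the same route as the paper: expand both sides of the distributivity axiom for the extended operation, note that the $X$-components agree by the ternary distributivity of $(X,T)$, and equate the $A$-components after cancelling $a$ to recover the cocycle condition. Your explicit verification that the translation maps on $X\times A$ are invertible (with the inverse $(x',c')\mapsto(R_{a,b}^{-1}(x'),\,c'-\phi(R_{a,b}^{-1}(x'),a,b))$) is a small completeness point the paper leaves implicit, but it does not change the argument.
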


\begin{figure}[htb]
    \begin{center}
   \includegraphics[width=2in]{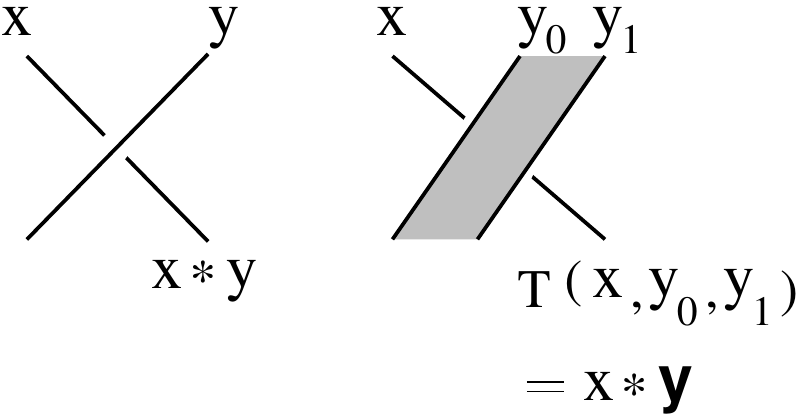}\\ 
    \caption{Diagrammatic representations of a binary (left) and ternary (right) operations 
    }\label{fig:diag1}
    \end{center}
\end{figure}

For a ternary distributive operation $T$ on $X$, we also use the notation
$$x * {\bf y} := T(x, y_0, y_1), $$
 where ${\bf y}=(y_0, y_1)$.
Although strictly speaking $T(x, y_0, y_1)$ is not equal to $T(x, (y_0, y_1))$, no confusion is likely to arise by this convention.
Furthermore, for ${\bf x}=(x_0, x_1)$, we use the notation
${\bf x} * {\bf y}$ to represent $$(x_0 * {\bf y}, x_1 * {\bf y}) = (T(x_0, y_0, y_1), T(x_1, y_0, y_1)). $$
In this notation the ternary distributivity can be written as 
$$ ( x * {\bf y}) * {\bf z}= (x * {\bf z} ) *  ({\bf y} * {\bf z} )  $$ 
in analogy to the binary case.

Figure~\ref{fig:diag1} depicts diagrammatic representations of binary and  ternary  operations, 
on the left and on the right, respectively.  See \cite{CKS}, for example, for more details on diagrammatics for racks and their knot colorings.

We also recall the definition of homology of ternary racks \cite{EGM}. 
Define first $C_n(X)$ to be the free abelian group generated by $(2n+1)$-tuples $(x_0,x_1, \ldots, x_{2n})$ of elements of a ternary rack $(X,T)$. Define the differentials
 $\partial_n : C_n(X) \longrightarrow C_{n-1}(X)$ as:
\begin{eqnarray*}
	\lefteqn{\partial _n (x_0, x_1, \ldots, x_{2n})}\\
	&=& \sum_{i=1}^{n}(-1)^i[(x_0,\ldots, \widehat{x_{2i-1}}, \widehat{x_{2i}}, \ldots, x_{2n})\\
	&& - (T(x_0,x_{2i-1},x_{2i}), \ldots, T(x_{2i-2},x_{2i-1}, x_{2i}, ),\widehat{x_{2i-1}}, \widehat{x_{2i}},\ldots, x_{2n} )].
\end{eqnarray*}
\begin{definition}
{\rm

	The $n^{th}$ homology group of the ternary rack $X$ is defined to be: 
	\begin{center}
		$ H_n(X) = {\rm ker} \partial_n/ {\rm im} \partial_{n+1}$.
	\end{center}
	
	}
\end{definition}

By dualizing the chain complex given above, we get a cohomology theory for ternary racks.

\begin{remark}
	{\rm
		Similar definitions give a homology and a cohomology theory for higher arity self-distributive operations. 
	}
\end{remark}

\section{Compositions of $n$-ary self-distributive operations } \label{sec:nary} 

In this section we generalize the notion of mutual distributive operations found in \cite{Jozef} to $n$-ary operations.
The vector notation for ternary operations is directly generalized to the $n$-ary ones:
Let $(X, W)$ be an $n$-ary distributive set.
Let  ${\bf y}=(y_1, \ldots, y_{n-1}) \in X^{n-1}$. Then the operation
$W: X^n \rightarrow X$ is denoted by $W(x, y_1, \ldots, y_{n-1})=W(x, {\bf y})$.
An  $n$-ary operation  is also denoted by $x * {\bf y} := W(x, {\bf y})$.
Here the extra parentheses caused by the vector notation is ignored, i.e., 
for ${\bf y}=(y_1, \ldots, y_{n-1})$ and ${\bf z}=(z_1, \ldots, z_{n-1})$, the concatenation 
$( {\bf y}, {\bf z} )$ or simply $ {\bf y}, {\bf z} $ denotes $(y_1, \ldots, y_{n-1}, z_1, \ldots, z_{n-1})$.
Furthermore, for ${\bf x} =(x_1, \ldots, x_{m}) \in X^m$ and ${\bf y} \in X^{n-1}$, denote 
$(W(x_1, {\bf y}), \ldots, W(x_m, {\bf y}) )$ by $W({\bf x} , {\bf y})$ or ${\bf x} * {\bf y}$. 

\begin{definition}\label{def:TmTn}
	{\rm
		Let $W_m$ and $W_n$ be $m$-ary and $n$-ary distributive operations on $X$, respectively.
		The two operations $W_m$ and $W_n$ are called {\it mutually distributive} if they satisfy 
		\begin{eqnarray*}
			W_n ( W_m (x, {\bf y}) ,  {\bf z} ) &=&  W_m (  W_n (x, {\bf z}), W_n ({\bf y}, {\bf z} ) ) \\
			W_m ( W_n (x, {\bf u}) ,  {\bf v} ) &=&  W_n (  W_m (x, {\bf v}), W_m ({\bf u}, {\bf v} ) )
		\end{eqnarray*}
		for all $x \in X$, ${\bf y}, {\bf v} \in X^{m-1}$ and ${\bf z}, {\bf u} \in X^{n-1}$.
	}
\end{definition}

\begin{example} \label{ex:halftriv}
	{\rm
		Let $(X, *_X)$, $(Y, *_Y)$ be racks. 
		Define $*_0, *_1$ on $X \times Y$, respectively, by 
		$(x_0, y_0) *_0 ( x_1, y_1)= (x_0 *_X x_1, y_0)$ and 
		$(x_0, y_0) *_1 ( x_1, y_1)= (x_0 , y_0 *_Y y_1)$.
		Then computation shows that $( *_0, *_1)$ are mutually distributive.
	}
\end{example}

\begin{example}\label{ex:powerrack}
	{\rm
		The following construction appears in \cite{IshiiIwakiri} and 
		provides examples of mutually distributive rack operations.
		Denote by $*^n$ the rack operation on $X$ defined by $n$-fold leftmost product
		$x *^n y = (\cdots (x*y)*y)* \cdots * y$. 
		Then $*_0=*^m$ and $*_1=*^n$ are mutually distributive for positive integers $m$ and $n$. 
		
		More generally, the following appears in \cite{IIJO,Jozef}. Let $X$ be a group, and 
		let $f_0 , f_1 \in {\rm Aut}(X)$ be mutually commuting group automorphisms. 
		Let $*_\epsilon$ be the  generalized Alexander quandles with respect to 
		$f_\epsilon$ for $\epsilon=0,1$. 
		Thus $x*_\epsilon y=(x y^{-1})^{f_{\epsilon} } y$, where the action is denoted in exponential notation.
		Then computations show that $*_0$ and $*_1$ are mutually distributive.
		
		
	}
\end{example}

There are mutually distributive operations with different arities, as the following example shows.

\begin{example}
	{\rm
		Let $X$ be a module over $\Z [ u^{\pm 1}, t^{\pm 1}, s]$ and $*$, $T$ be affine binary and ternary rack
		operations, respectively, defined  by 
		\begin{eqnarray*} 
			x * y &=& ux + (1-u)y , \\
			T(x,y, z) &=& t x + s y + (1-t-s)z .
		\end{eqnarray*}
		Then computations show that $*$ and $T$ are mutually distributive.
	}
\end{example}

\begin{remark}
	{\rm
		We note that for a group $G$, 
		the core binary operation  ($x*y=yx^{-1}y$)
		and the ternary operation heap ($x \ \hat{*}\ (y_0, y_1)=
		x y_0^{-1}y_1 $) satisfy 
		$( x * y ) \ \hat{*}\  {\bf z} = ( x  \ \hat{*}\  {\bf z} ) * ( y  \ \hat{*}\  {\bf z} )$ but not
		$( x \ \hat{*}\  {\bf y}  ) *  z = ( x  * z  ) \ \hat{*} \  ( {\bf y}  * z  )$.
		
	}
\end{remark}

Next, we show that composing mutually distributive operations results in new higher arity self-distributive operations.

\begin{proposition}\label{prop:TmTn}
	Let $W_m$ and $W_n$ be mutually distributive $m$-ary and $n$-ary distributive operations on $X$.
	Then $W: X^{m+n-1} \rightarrow X$ defined by 
	$$
	W( x , {\bf y}, {\bf z}) = W_n ( W_m (x, {\bf y}) , {\bf z} ) 
	$$
	is an $(m+n-1)$-ary distributive operation.
\end{proposition}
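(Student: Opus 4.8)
The plan is to verify the $(m+n-1)$-ary distributivity of $W$ by a direct computation that generalizes the proof of Lemma~\ref{lem:RtoT}, which is exactly the case $m=n=2$. In the vector notation, writing the composite as $W(x,{\bf y},{\bf z}) = W_n(W_m(x,{\bf y}),{\bf z})$ with ${\bf y}, {\bf y}'\in X^{m-1}$ and ${\bf z},{\bf z}'\in X^{n-1}$, and abbreviating $W_m,W_n$ by $*_m,*_n$ in product notation, the identity to be established is
\begin{eqnarray*}
\lefteqn{ W( W( x, {\bf y}, {\bf z}), {\bf y}', {\bf z}' ) } \\
&=& W( W(x, {\bf y}', {\bf z}'), W({\bf y}, {\bf y}', {\bf z}'), W({\bf z}, {\bf y}', {\bf z}') ),
\end{eqnarray*}
where on the right $W$ is applied entrywise to the components of ${\bf y}$ and of ${\bf z}$. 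This is precisely the vector-form distributivity $(x\ast{\bf a})\ast{\bf b}=(x\ast{\bf b})\ast({\bf a}\ast{\bf b})$ for $\ast=W$, with ${\bf a}=({\bf y},{\bf z})$ and ${\bf b}=({\bf y}',{\bf z}')$.

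First I would expand the left-hand side by the definition of $W$ into the nested expression $[((x *_m {\bf y}) *_n {\bf z}) *_m {\bf y}'] *_n {\bf z}'$. Then I would rewrite it in four steps, each mirroring one line of the proof of Lemma~\ref{lem:RtoT}: (i) push the inner $*_m{\bf y}'$ past $*_n{\bf z}$ using the \emph{second} mutual-distributivity relation of Definition~\ref{def:TmTn}, turning the bracket into $((x *_m {\bf y}) *_m {\bf y}') *_n ({\bf z} *_m {\bf y}')$; (ii) apply the self-distributivity of $W_m$ to $(x *_m {\bf y}) *_m {\bf y}'$, producing $(x *_m {\bf y}') *_m ({\bf y} *_m {\bf y}')$; (iii) apply the self-distributivity of $W_n$ to push the outer $*_n{\bf z}'$ across, yielding $[((x *_m {\bf y}') *_m ({\bf y} *_m {\bf y}')) *_n {\bf z}'] *_n [({\bf z} *_m {\bf y}') *_n {\bf z}']$; (iv) apply the \emph{first} mutual-distributivity relation to the remaining $*_m$-then-$*_n$ block. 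After these rewrites the expression reads $[((x *_m {\bf y}') *_n {\bf z}') *_m (({\bf y} *_m {\bf y}') *_n {\bf z}')] *_n [({\bf z} *_m {\bf y}') *_n {\bf z}']$, which I would recognize as the right-hand side above, since $W({\bf y},{\bf y}',{\bf z}') = ({\bf y} *_m {\bf y}') *_n {\bf z}'$ and likewise $W({\bf z},{\bf y}',{\bf z}') = ({\bf z} *_m {\bf y}') *_n {\bf z}'$.

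The only real subtlety, and the step I would watch most carefully, is the bookkeeping of the vector notation: at each rewrite I must keep a single element in the first (scalar) slot of whichever relation I invoke, and I must confirm that the entrywise operations compose as claimed, i.e. that applying $W_m(-,{\bf y}')$ and then $W_n(-,{\bf z}')$ componentwise to a tuple ${\bf y}$ produces exactly $({\bf y} *_m {\bf y}') *_n {\bf z}'$. Each such identity holds because both operations, together with their self- and mutual distributivity, act coordinate by coordinate, so no relation beyond Definition~\ref{def:TmTn} and the two individual self-distributivities is needed. Invertibility plays no role, since the statement asserts only that $W$ is distributive.
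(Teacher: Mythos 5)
Your proposal is correct and follows essentially the same route as the paper: expand $W(W(x,{\bf y},{\bf z}),{\bf y}',{\bf z}')$ into the nested product $[((x *_m {\bf y}) *_n {\bf z}) *_m {\bf y}'] *_n {\bf z}'$ and rewrite it in four steps using, in order, the second mutual-distributivity relation, self-distributivity of $W_m$, self-distributivity of $W_n$, and the first mutual-distributivity relation (the paper uses ${\bf u},{\bf v}$ where you write ${\bf y}',{\bf z}'$). Your closing remark on the entrywise bookkeeping of the vector notation is a point the paper leaves implicit, but it does not change the argument.
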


\begin{proof} We establish the equality
	$$
	W(W (x, {\bf y} , {\bf z}), {\bf u}, {\bf v})= W(W (x, {\bf u} , {\bf v}),W({\bf y},
	{\bf u}, {\bf v}), W({\bf z}, {\bf u}, {\bf v})).
	$$
	We replace $W_n(x, {\bf y})$ by the notation $ x *_n {\bf y}$.  Thus we have 
	$$
	W (x, {\bf y} , {\bf z}):=(x *_m {\bf y})*_n {\bf z}.
	$$
	Then we compute
	\begin{eqnarray*}
		\lefteqn{W(W (x, {\bf y} , {\bf z}), {\bf u}, {\bf v})}\\
		&=& [[(x *_m {\bf y}) *_n {\bf z}]*_m {\bf u}]*_n {\bf v}  \\
		&=& ([(x *_m {\bf y}) *_m {\bf u}]*_n [{\bf z}*_m {\bf u}])*_n {\bf v}\\
		&=& [[(x *_m {\bf u}) *_m ({\bf y}*_m {\bf u})]*_n ({\bf z}*_m {\bf u})]*_n {\bf v}\\
		&=& [[(x *_m {\bf u}) *_m ({\bf y}*_m {\bf u})]*_n {\bf v}] *_n  [({\bf z}*_m {\bf u})*_n {\bf v}]	\\
		&=& [[ (x *_m {\bf u}) *_n {\bf v} ]  *_m  [ ({\bf y}*_m {\bf u}) *_n {\bf v}] ] *_n  [({\bf z}*_m {\bf u})*_n {\bf v}]	\\
		&=& W(W (x, {\bf u} , {\bf v}),W({\bf y},
		{\bf u}, {\bf v}), W({\bf z}, {\bf u}, {\bf v})),
	\end{eqnarray*}
	where the second and the fifth equalities follow from the mutual distributivity of $*_m$ and $ *_n$.  This concludes the proof. 
\end{proof}

\begin{remark}
	{\rm 
	Let $(X, *_0, *_1)$ be mutually distributive binary operations. 
Let $T$ be the TSD operation defined in 
Proposition~\ref{prop:TmTn}. 
Then it is written as 
$T(x, y, z) = (x *_0 y ) *_1 z $ for $x,y,z\in X$.
We note that 
		the two ternary structures $T(x,y,z)=(x*_0 y ) *_1 z$ and $T'(x,y,z)=(x*_1 y ) *_0 z$ may not be isomorphic in general as the following example shows.
		
		Consider the set $\mathbb{Z}_3$ with the two binary operations $x*_0 y=x$ and $x*_1y=2y-x$.  The induced ternary structures $T(x,y,z)=(x*_0 y ) *_1 z$ and $T'(x,y,z)=(x*_1 y ) *_0 z$ are not isomorphic. 
		In fact, if $f: (\mathbb{Z}_3, T) \rightarrow  (\mathbb{Z}_3, T')$ is an isomorphism then for all $x, y, z$ in $\mathbb{Z}_3$, we have $ f(T(x,y,z))=T'(f(x),f(y),f(z)).$  Then $f(2z-x)=2f(y)-f(x)$. One obtains then a contradition, for example, by setting $x=z=0$. 
		
	}
\end{remark}

\begin{definition}
	{\rm
		Let $*_{n_j}$, $j=1, \ldots, k$, be distributive $n_j$-ary operations on $X$ that are pairwise
		mutually distributive. Then we call $(X, \{ *_{n_j}\}_{j=1}^{k})$ 
		a {\it mutually distributive set}.
	}
\end{definition}


%
%
%

\section{Homology of mutually distributive sets}\label{sec:labeledhom}

We generalize both the $n$-ary distributive homology \cite{EGM} and homology of distributive sets \cite{Jozef}
to mutually distributive sets of general $n$-ary operations
as follows. The relation between this chain complex and the $n$-ary operations that result from mutually distributive sets
as in Proposition~\ref{prop:TmTn} will be given in the next section in the form of chain map.

\begin{definition}\label{def:labchain}
	{\rm
		
		Let  $(X, \{ *_{n_j}\}_{j=1}^{k})$ be a mutually distributive set.
		Let $\vec{ \epsilon}=(\epsilon_1, \ldots, \epsilon_{n-1})$ be a vector such that $\epsilon_i\in \{n_j \}_{j=1}^{k}$
		for $i=1, \ldots, n-1$. 
		Let chain groups $C^{\vec{ \epsilon}}_n(X)$ be defined by the free abelian group generated by 
		tuples ${\bf x} = (x_0, ( {\bf x}_1,  \epsilon_1) , \ldots, ( {\bf x}_{n-1}, \epsilon_{n-1}) )$.
		Define $C_n(X)=\oplus_{\vec{ \epsilon}} \ C^{\vec{\epsilon}}_n(X)$ where the direct sum ranges over all 
		possible vectors $\vec{\epsilon}$.
		Define the differential 
		$\partial^{\vec{ \epsilon}}_n: C^{\vec{ \epsilon}}_n(X) \rightarrow C_{n-1}(X)$ by 
		\begin{eqnarray*}
			\partial^{\vec{ \epsilon} }_n ({\bf x} )
			& = & \sum_{i=1}^{n-1} (-1)^i [ ( x_0 *_{\epsilon_i} {\bf x}_i , ({\bf x}_1 *_{\epsilon_i} {\bf x}_i , \epsilon_1), 
			\ldots,  (  {\bf x}_{i-1} *_{\epsilon_i} {\bf x}_i , \epsilon_{i-1}), \widehat{( {\bf x}_i , \epsilon_{i})}, \\
			& & \hspace{2in} ( {\bf x}_{i+1} , \epsilon_{i+1}),
			\ldots, ( {\bf x}_{n-1}, \epsilon_{n-1}) ) \\
			& &\hspace{1in}  - 
			(x_0, ( {\bf x}_1,  \epsilon_1) , \ldots, \widehat{( {\bf x}_i , \epsilon_{i})}, 
			\ldots, ( {\bf x}_{n-1}, \epsilon_{n-1}) ) ] ,
		\end{eqnarray*}
		and let $$\partial_n = \sum_{\vec{ \epsilon}} \partial^{\vec{ \epsilon}}_n: C_n(X)\rightarrow C_{n-1}(X).$$
	}
\end{definition} 

\begin{lemma}\label{ChainCmplx}
	Let $(X, \{ *_{n_j}\}_{j=1}^{k})$ be a mutually distributive set. Then the sequence
	$(C_n(X), \partial_n)$ defines a chain complex.
\end{lemma}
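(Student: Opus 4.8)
The plan is to show $\partial_{n-1}\circ\partial_n = 0$ by repackaging $\partial_n$ in terms of elementary face maps and verifying the usual simplicial commutation relations among them, exactly as in the proofs that rack homology and $n$-ary distributive homology are well defined. For each $i = 1, \ldots, n-1$ I would introduce two maps on generators: the \emph{deletion} map $d_i^0$, which drops the block $({\bf x}_i, \epsilon_i)$ and leaves all other entries untouched, and the \emph{action-and-deletion} map $d_i^1$, which replaces $x_0$ by $x_0 *_{\epsilon_i}{\bf x}_i$ and each ${\bf x}_\ell$ with $\ell < i$ by ${\bf x}_\ell *_{\epsilon_i}{\bf x}_i$, keeps the labels $\epsilon_\ell$, and then drops the block $({\bf x}_i,\epsilon_i)$. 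With this notation $\partial_n = \sum_{i=1}^{n-1}(-1)^i(d_i^1 - d_i^0)$, and the individual arities and labels are irrelevant to the combinatorics, since each block is treated as a single unit.

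The core of the argument is the family of commutation identities, valid for $1 \le i < j \le n-1$ and $\alpha, \beta \in \{0,1\}$,
$$d_i^\beta\, d_j^\alpha = d_{j-1}^\alpha\, d_i^\beta,$$
where the composite means ``apply the right-hand map first.'' Three of the four cases are bookkeeping: $d_i^0 d_j^0 = d_{j-1}^0 d_i^0$ because iterated deletions commute after reindexing; $d_i^1 d_j^0 = d_{j-1}^0 d_i^1$ because the $*_{\epsilon_i}$-action of $d_i^1$ touches only blocks of index $< i < j$ and so is independent of the deletion of block $j$; and $d_i^0 d_j^1 = d_{j-1}^1 d_i^0$ because the action performed by $d_j^1$ on block $i$ is annihilated by the subsequent deletion, while its action on every other surviving block is insensitive to the order.

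The one substantive case is $d_i^1 d_j^1 = d_{j-1}^1 d_i^1$, and this is where the distributivity hypotheses enter. Tracing a single surviving entry ${\bf x}_\ell$ with $\ell < i$ (and likewise $x_0$), the left-hand side produces $({\bf x}_\ell *_{\epsilon_j}{\bf x}_j) *_{\epsilon_i} ({\bf x}_i *_{\epsilon_j}{\bf x}_j)$, whereas the right-hand side produces $({\bf x}_\ell *_{\epsilon_i}{\bf x}_i) *_{\epsilon_j}{\bf x}_j$; an entry ${\bf x}_\ell$ with $i < \ell < j$ becomes ${\bf x}_\ell *_{\epsilon_j}{\bf x}_j$ on both sides, and entries beyond $j$ are untouched. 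Equality of the two displayed expressions is precisely the mutual distributivity of $*_{\epsilon_i}$ and $*_{\epsilon_j}$ from Definition~\ref{def:TmTn}, degenerating to ordinary self-distributivity of a single operation when $\epsilon_i = \epsilon_j$. This is the step I expect to be the main obstacle, chiefly because of the label and index bookkeeping needed to confirm that the surviving blocks line up correctly; the algebraic content is a single application of the defining identity per entry.

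Finally, I would assemble these identities into $\partial^2 = 0$ by the standard signed cancellation. Expanding $\partial_{n-1}\partial_n = \sum_{a=1}^{n-1}\sum_{b=1}^{n-2}(-1)^{a+b} d_b\, d_a$, with $d_a = d_a^1 - d_a^0$ applied first, the identities above give $d_b\, d_a = d_{a-1}\, d_b$ for $b < a$. The assignment $(a,b) \mapsto (b, a-1)$ is then a bijection between the index pairs with $b < a$ and those with $a' \le b'$, sending each summand to one that is equal as an operator but carries the opposite sign $(-1)^{a+b-1}$; hence all terms cancel in pairs and $\partial_{n-1}\partial_n = 0$, so $(C_n(X),\partial_n)$ is a chain complex.
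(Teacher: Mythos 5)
Your proof is correct and takes essentially the same route as the paper, which likewise decomposes $\partial_n$ into signed differences of face maps, reduces $\partial^2=0$ to the pre-simplicial commutation relations, and discharges the only substantive case (double action) by a single application of the mutual distributivity identity. The only difference is presentational: you split each $\partial^i$ into its two constituent face maps $d_i^0,d_i^1$ and check all four commutation cases explicitly, whereas the paper verifies the relation directly for the differences $\partial^i=d_i^1-d_i^0$.
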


\begin{proof}
	We define, for each vector $\vec{ \epsilon}$ and $i=1, \ldots, n-1$, linear maps 
	\begin{eqnarray*}
		\partial^{i\vec{\epsilon}}_n({\bf x} ) 
		&=&  [ ( x_0 *_{\epsilon_i} {\bf x}_i , ({\bf x}_1 *_{\epsilon_i} {\bf x}_i , \epsilon_1), 
		\ldots,  ({\bf x}_{i-1} *_{\epsilon_i} {\bf x}_i , \epsilon_{i-1}), \widehat{( {\bf x}_i , \epsilon_{i})}, \\
		& & \hspace{2in} ( {\bf x}_{i+1} , \epsilon_{i+1}),
		\ldots, ( {\bf x}_{n-1}, \epsilon_{n-1}) ) \\
		& &\hspace{1in}  - 
		(x_0, ( {\bf x}_1,  \epsilon_1) , \ldots, \widehat{( {\bf x}_i , \epsilon_{i})}, 
		\ldots, ( {\bf x}_{n-1}, \epsilon_{n-1}) ) ].
	\end{eqnarray*}
	Therefore by definition, $\partial^{\vec{\epsilon}}_n = \sum_i (-1)^i \partial^{i\vec{\epsilon}}_n$. It is enough to show now that the maps $\partial^{i\vec{\epsilon}}_n$ satisfy the pre-simplicial complex relation: $\partial^{i\vec{\epsilon}}_{n-1}\partial^{j\vec{\epsilon}}_n = \partial^{j\vec{\epsilon}}_{n-1}\partial^{(i+1) \vec{\epsilon}}_n$ for each $n\in\mathbb{N}$ whenever $j<i$. 
	
	\begin{sloppypar}
		Fix a vector $\vec{\epsilon} = (\epsilon_1, \ldots , \epsilon_{n-1})$ and consider an element $(x_0, (x_1,\epsilon_1),\ldots , (x_{n-1},\epsilon_{n-1})) \in C^{\vec{ \epsilon}}_n(X)$. Then we have: 
		\begin{eqnarray*}
			&&\partial^{i}_{n-1}\partial^{j}_n (x_0, ({\bf x}_1,\epsilon_1),\ldots , ({\bf x}_{n-1},\epsilon_{n-1}))  =\\
			&& ((x_0*_{\epsilon_j}{\bf x}_j )*_{\epsilon_{i+1}}{\bf x}_{i+1},( ({\bf x}_1 *_{\epsilon_j}{\bf x}_j )*_{\epsilon_{i+1}}{\bf x}_{i+1}, \epsilon_1 ), \ldots , \widehat{( {\bf x}_j , \epsilon_{j})}, \\ 
			&& ({\bf x}_{j+1}*{\bf x}_{i+1},\epsilon_{j+1}), \ldots, \widehat{( {\bf x}_i , \epsilon_{i})}, \ldots , ({\bf x}_{n-1},\epsilon_{n-1})) \\
			&& - (x_0, ({\bf x}_1 , \epsilon_1 ), \ldots , \widehat{( {\bf x}_j , \epsilon_{j})}, \\ 
			&& ({\bf x}_{j+1},\epsilon_{j+1}), \ldots, \widehat{( {\bf x}_{i+1} , \epsilon_{i+1})}, \ldots , ({\bf x}_{n-1},\epsilon_{n-1})).
		\end{eqnarray*}
	\end{sloppypar}
	
	On the other hand we have:
	\begin{eqnarray*}
		&&\partial^{j\vec{\epsilon}}_{n-1}\partial^{(i+1)\vec{\epsilon}}_n (x_0, ({\bf x}_1,\epsilon_1),\ldots,({\bf x}_{n-1},\epsilon_{n-1})) =\\
		&&((x_0*_{\epsilon_{i+1}}{\bf x}_{i+1})*_{\epsilon_{j}}({\bf x}_j*_{\epsilon_{i+1}}{\bf x}_{i+1}),(({\bf x}_1*_{\epsilon_{i+1}}{\bf x}_{i+1})*_{\epsilon_{j}}({\bf x}_j*_{\epsilon_{i+1}}{\bf x}_{i+1} ), \epsilon_1),\\ 
		&&\ldots \widehat{( {\bf x}_j , \epsilon_{j})}, ({\bf x}_{j+1}*_{\epsilon_{i+1}}{\bf x}_{i+1}, \epsilon_{j+1}), \ldots , \widehat{( {\bf x}_{i+1} , \epsilon_{i+1})}, \ldots , ({\bf x}_{n-1},\epsilon_{n-1})) \\
		&& - (x_0, ({\bf x}_1 , \epsilon_1 ), \ldots , \widehat{( {\bf x}_j , \epsilon_{j})}, \\ 
		&& ({\bf x}_{j+1},\epsilon_{j+1}), \ldots, \widehat{( {\bf x}_{i+1} , \epsilon_{i+1})}, \ldots , ({\bf x}_{n-1},\epsilon_{n-1})),
	\end{eqnarray*}
	where we have used the vector notation introduced in Section \ref{sec:nary}. The two quantities are equal, in virtue of the property of mutual distributivity of the set $\{ *_{n_j}\}_{j=1}^{k}$.  
\end{proof}

\begin{definition}
	{\rm 
		
		The chain complex defined by Definition \ref{def:labchain} and the homology that it induces will be called labeled chain complex and labeled homology and will be denoted $C^L_\bullet(X)$ and $H^L_\bullet(X)$, respectively.
	}
\end{definition}
\begin{figure}[htb]
	\begin{center}
		\includegraphics[width=4.5in]{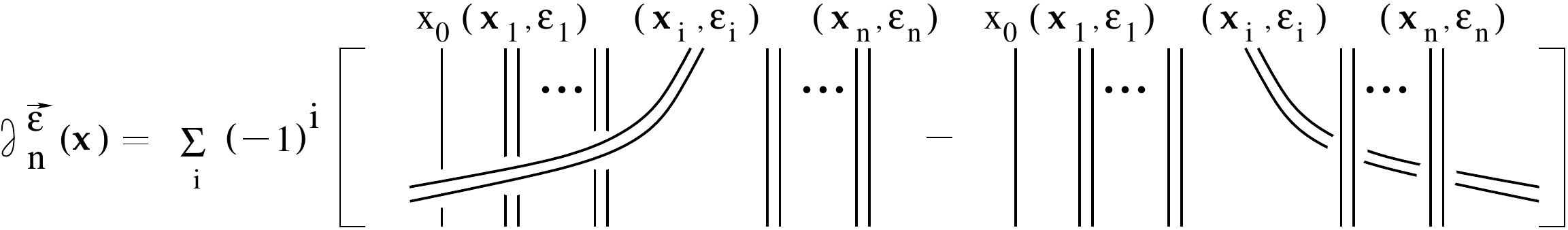}\\
		\caption{Curtain diagram representing chain maps}\label{fig:ribboncurtain}
	\end{center}
\end{figure}

\begin{remark}
	{\rm 
		
		The chain complex in Definition \ref{def:labchain} has a diagrammatic interpretation as in Figure \ref{fig:ribboncurtain}. In particular, the mutual distributivity condition takes the same form as in the curtain homology of \cite{PW}. 
	}
\end{remark}
\begin{remark}
	{\rm
		We observe that if $(X,\{*_{n_j} \})$ is a mutually distributive set, then $C^L_\bullet(X)$ contains the standard self-distributive complexes relative to each $*_{n_j}$ as subcomplexes. 
	}
\end{remark}

%
%
%
%


\begin{remark}
	{\rm
		
		The multiplication on binary operations considered in \cite{Jozef} can be directly generalized to $n$-ary operations
		as follows.
		Given a nonempty set $X$, let ${\rm Dist}_{M}(X)$ denotes the set of all $n$-ary mutually distributive operations on $X$.
		Define the following multiplication on ${\rm Dist}_{M}(X)$:
		$$
		(W \cdot W')(x,{\bf y}):=W(W'(x,{\bf y}),{\bf y})
		$$
		for all $x \in X$ and ${\bf y} \in X^{n-1}$.
		Then it is straightforward to see that the  multiplication defined above makes ${\rm Dist}_{M}(X)$ into a 
		monoid 
		with identity $W_0$  given by $W_0(x,{\bf y} )=x,$ for all $ x \in X$ and ${\bf y}\in X^{n-1}$. 
		
		For example, let $(X,T)$  be a  ternary rack.
		Define, inductively, 
		$$T^n(x, y_0 , y_1)=T ( T^{n-1} ( x, y_0, y_1) , y_0, y_1 ). $$
		Then  $(X, T^n)$ is a ternary distributive set for all positive integer $n$.
		
	}
\end{remark}



\begin{remark}\label{rmk:labcohomology}
	{\rm
		For a given abelian group $A$, we obtain a labeled cochain complex with coefficients in $A$, upon dualizing the chain complex in Definition \ref{def:labchain}. We will write $C^n_L(X;A)$ and $H^n_L(X;A)$ to indicate the labeled $n^{\rm{th}}$ cochain and cohomology groups with coefficients in $A$, respectively. 
	}
\end{remark}

\section{Chain maps under $n$-ary compositions  }\label{sec:chain-map}

In this section we show that the cohomology of an operation obtained by composing mutually distributive operations as in Proposition~\ref{prop:TmTn} and the cohomology of the operations themselves are related to the labeled cohomology of Definition~\ref{def:labchain} via chain maps as follows.
The algebraic and geometric motivations and significance of the chain maps are explained later in this section.

\begin{definition}\label{LDchainmaps}
	{\rm 
		Let $(X,*_0,*_1)$ be a distributive set, where $*_0$ and $*_1$ are operations of arity $k$ and $k'$, respectively. Call $W$ the $(k+k'-1)$-ary corresponding self-distributive operation. We define chain maps ${\mathcal F}_{\sharp, n} : C_n^W(X) \longrightarrow C_n^L (X)$, from the $(k+k'-1)$-ary cochain complex relative to $W$, to the chain complex defined by Lemma \ref{ChainCmplx} for $n=1,2,3$. Explicitly: 
		\begin{eqnarray*}
			{\mathcal F}_{\sharp, 1} &=& \mathbf{1}\\
			{\mathcal F}_{\sharp, 2}(x,{\bf y}_0,{\bf y}_1) &=& (x,{\bf y}_0)_0 + (x*_0 {\bf y}_0, {\bf y}_1)_1 \\
			{\mathcal F}_{\sharp, 3}(x,{\bf y}_0,{\bf y}_1,{\bf z}_0,{\bf z}_1) &=&  (x,{\bf y}_0,{\bf z}_0)_{00} + (x*_0{\bf z}_0,{\bf y}_0*_0{\bf z}_0,{\bf z}_1)_{01} + \\
			&&  (x*_0{\bf y}_0,{\bf y}_1,{\bf z}_0)_{10} + ((x*_0{\bf y}_0)*_0{\bf z}_0,{\bf y}_1*_0{\bf z}_0,{\bf z}_1)_{11},
		\end{eqnarray*}
		where we put the labels as a subscript and ${\bf y}_i$, ${\bf z}_i$ are vectors of appropriate lengths, according to the conventions explained in Section~\ref{sec:nary}.
	}
\end{definition}

%

\begin{definition}\label{def:labcohmap}
	{\rm
		Let $(X, *_0, *_1)$ be a mutually distributive racks and $A$ be an abelian group. Let  ${\mathcal F}^{\sharp, n}: C^n_L(X,A) \rightarrow C^n_W(X,A)$ for $n=2,3$ be the maps obtained from ${\mathcal F}_{\sharp, n}$ by dualization.
	}
\end{definition}

\begin{theorem}\label{lem:labcohmaps} 
	For $n=2,3$ the maps ${\mathcal F}_{\sharp, n}$ define chain maps.
	Therefore they define induced homomorphisms ${\mathcal F}_{*, n} : H^W_n(X,A) \rightarrow H_n^L(X,A)$ in homology and 
	${\mathcal F}^{*, n} : H^n_L(X,A) \rightarrow H^n_W(X,A)$ in cohomology.
\end{theorem}

\begin{proof}
	We prove the statement in the case of two binary mutually distributive operations $*_0$ and $*_1$, resulting in a ternary self-distributive operation $T$. The general case being an application of the same procedure with vector notation.
	For a ternary $2$-chain $(x,y_0,y_1)$ we have:
	\begin{eqnarray*}
		\lefteqn{ 
			\partial {\mathcal F}_{\sharp, 2}(x,y_0,y_1) =}\\
		& &  -(x*_0y_0) + (x) - ( (x*_0y_0)*_1y_1 ) + (x*_0y_0 )= \partial_T  (x,y_0,y_1). 
	\end{eqnarray*}	
	By direct computation, we also have: 
	\begin{eqnarray*}
		\lefteqn{ 
			{\mathcal F}_{\sharp, 2}\partial_T (x,y_0,y_1,z_0,z_1) 
		}\\
		&= & (x,z_0)_0 + (x*_0 z_0,z_1)_1 - (T(x,y_0,y_1),z_0)_0 \\
		&& - (T(x,y_0,y_1)*_0z_0,z_1)_1 - (x,y_0)_0 - (x*_0y_0,y_1)_1 \\
		&& + ( T(x,z_0,z_1), T(y_0,z_0,z_1))_0 \\
		& & + (T(x,z_0,z_1)*_0T(y_0,z_0,z_1), T(y_1,z_0,z_1))_1.
	\end{eqnarray*}  
	On the other hand, the following holds: 
	\begin{eqnarray*}
		\lefteqn{
			\partial {\mathcal F}_{\sharp, 3} (x,y_0,y_1,z_0,z_1) }
		\\
		&= &(x,z_0)_0 - (x *_0 y_0,z_0)_0 - (x,y_0)_0 \\
		&& + (x*_0z_0,y_0*_0z_0)_0 + (x*_0z_0, z_1)_1 - ((x*_0z_0)*_0(y_0*_0z_0),z_1)_1\\
		&& - (x*_0z_0,y_0*_0z_0)_0 + (T(x,z_0,z_1),T(y_0,z_0,z_1))_0  \\
		&& + (x*_0y_0,z_0)_0 - (T(x,y_0,y_1),z_0)_0 \\
		&& - (x*_0y_0,y_1)_1 + ((x*_0y_0)*_0z_0,y_1*_0z_0)_1\\
		&& + ((x*_0y_0)*_0z_0,z_1)_1 - (((x*_0y_0)*_0z_0)*_1(y_1*_0z_0),z_1)_1 \\
		&& - ((x*_0y_0)*_0z_0,y_1*_0z_0)_1 + (((x*_0y_0)*_0z_0)*_1z_1,T(y_1,z_0,z_1))_1 .
	\end{eqnarray*}	
	The two quantities can be seen to be equal, making use of the identity:
	\[
	T(x,z_0,z_1)*_0T(y_0,z_0,z_1) = ((x*_0y_0)*_0z_0)*_1z_1.
	\]	
	Therefore we obtain	${\mathcal F}_{\sharp, 2}\partial_T = \partial {\mathcal F}_{\sharp, 3}$, which concludes the proof of the first statement.  The second statement follows easily from the first one by standard arguments  in homological algebra.
\end{proof}

\begin{remark}
{\rm
 Let $(X,*_0,*_1)$ be a mutually distributive rack and let $C^2_L(X;A)$ be the second labeled cochain group with coefficients in $A$, then 
 the labeled $2$-cocycle conditions corresponding to $\delta^{(01)} \psi =0 $ and $\delta^{(10)} \psi =0$ take the following form
 	\begin{eqnarray*}
 	\phi_0(x,y) + \phi_1(x*_0 y,z) &=&  \phi_1(x,z) + \phi_0(x*_1 z,y *_1 z),  \label{one-zero}\\
 	\phi_1(x,y) + \phi_0(x*_1y,z) &=& \phi_0(x,z) + \phi_1(x*_0z,y *_0 z).  
 \end{eqnarray*}
 }
 \end{remark}
 
 \begin{definition}\label{def:mutual2cocy}
 {\rm 
We call a pair $(\phi_0,\phi_1)$ satisfying the preceding  equations \textit{mutually distributive}. 
}
\end{definition}

Observe  that $(\phi_0,\phi_1)$ being a labeled $2$-cocycle means that it is mutually distributive, $\phi_0$ is a $2$-cocycle for the operation $*_0$ and $\phi_1$ is a $2$-cocycle for $*_1$. 

\begin{remark}\label{rem:2to3}
{\rm 
Let $(X, *_0, *_1)$ be mutually distributive binary operations
and $T$ be 
the ternary self-distributive operation defined in 
Proposition~\ref{prop:TmTn}. 
Let $A$ be an abelian group.
By Theorem~\ref{lem:labcohmaps}, 
${\mathcal F}^{\sharp, 2} (\phi_0, \phi_1) = \psi $ is a ternary $2$-cocycle in $C^2_T (X, A)$
for a labeled 2-cocycle $(\phi_0, \phi_1)$. 
The explicit form of the ternary $2$-cocycle
\begin{eqnarray*}\label{TernCocy}
	\psi(x,y,z):=\phi_0(x,y) + \phi_1(x*_0 y,z).
\end{eqnarray*}
}
\end{remark}

\begin{remark}\label{mutualrack2cocy}
 {\rm
 The case of mutually distributive binary operations whose composition gives a ternary operation is of particular interest to us since 
this is the algebraic counterpart of a diagrammatic doubling procedure
 particularly adapt to interpret colorings of framed 
 tangles by ternary racks. The ternary $2$-cocycles resulting from Theorem~\ref{lem:labcohmaps} 
 can therefore be used to define cocycle invariants for framed tangles. 
 This construction of cocycles corresponds to 
 those in \cite{IshiiIwakiri} for handlebody-links.
 
From this geometric point of view, we present a direct, geometric 
 proof that $\psi$ in Remark~\ref{rem:2to3} satisfies the ternary 2-cocycle condition.
 We  show
\begin{eqnarray*}  
	\lefteqn{ \psi(x, y_0,  y_1) + \psi(T(x, y_0, y_1), z_0, z_1)  = }\\
	& &  \psi(x, z_0, z_1)  
	+  \psi(T(x, z_0, z_1), T(y_0, z_0, z_1), T(y_1, z_0, z_1)).
\end{eqnarray*}
The computations  below are aided by diagrams shown in Figure~\ref{fig:2cocy}, where each equality is represented by a type III Reidemeister move. In the figure and the computations below, underlines highlight those terms to which the cocycle condition is applied. 
\begin{eqnarray*}
	{\rm LHS}
	&=& \phi_0(x,y_0) +
	\underline{  \phi_1(x *_0 y_0, y_1)  + \phi_0((x *_0 y_0 )*_1 y_1), z_0)  } \\
	& & \quad + \phi_1((x*_0 y_0)*_1 y_1)*_0 z_0 , z_1)   \nonumber \\
	&=& 
	\underline{  \phi_0(x,y_0) +  \phi_0(x *_0 y_0, z_0) } + \phi_1((x*_0 y_0)*_1 y_1)*_0 z_0 , z_1)  \\
	& & \quad + \phi_1( (x*_0 y_0)*_0 z_0,  y_1*_0 z_0 ) \\
	&=& \phi_0(x,z_0) + \phi_0(x *_0 z_0,y_0 *_0  z_0)
	+ \underline{ \phi_1((x*_0 y_0)*_0 z_0)*_0 z_0 , y_1*_0 z_0 ) } \\
	& & \quad + \underline{ \phi_1((( x*_0 y_0) *_1 y_1) *_0 z_0 , z_1 ) } \\
	&=&  \phi_0(x,z_0) + \underline{\phi_0(  x*_0 z_0, y_0 *_0 z_0) +  \phi_1((x*_0 y_0)*_0 z_0) , z_1 ) }\\
	& & \quad + \phi_1((x*_0 y_0)*_0 z_0)*_1 z_1 , (y_1*_0 z_0 )*_1 z_1 ) \\
	&= & 
	\phi_0(x,z_0) +  \phi_1(x*_0 z_0, z_1 )\\
	& & \quad +  \phi_1((x*_0 y_0)*_0 z_0)*_1 z_1 , (y_1*_0 z_0 )*_1 z_1 ) \\
	& & 
	\quad +  \phi_0((x*_0 z_0)*_0 z_0)*_1 z_1 , (y_1*_0 z_0 )*_1 z_1 )   \quad = \quad {\rm RHS}
\end{eqnarray*}
}
\end{remark}

\begin{figure}[htb]
	\begin{center}
		\includegraphics[width=5in]{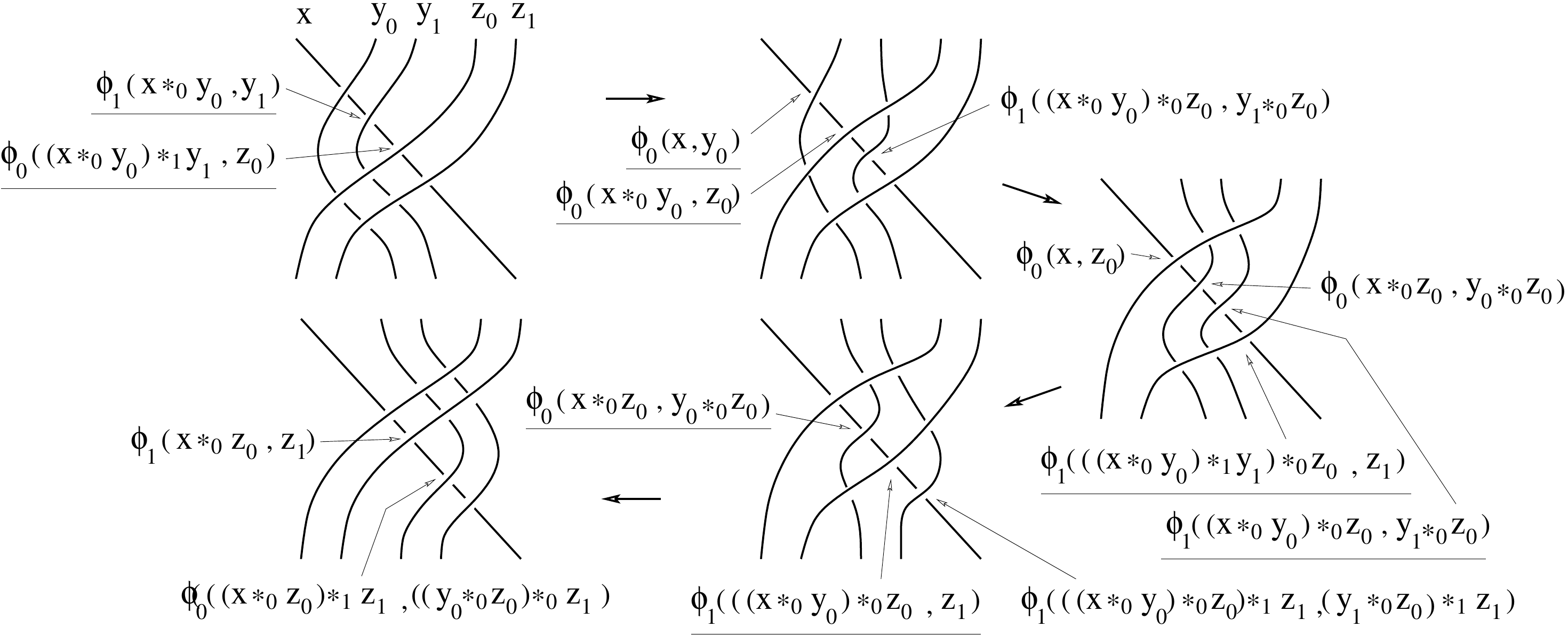}\\
		\caption{Diagrammatic proof of 2-cocycle conditions }\label{fig:2cocy}
	\end{center}
\end{figure}

\begin{example} 
	{\rm
		Let $(X, *_X)$, $(Y, *_Y)$ be racks, and 
		$( *_0, *_1)$ be  mutually distributive operations defined
		on $X \times Y$  in Example~\ref{ex:halftriv}.
		Let $\phi_X$ and $\phi_Y$ be 2-cocycles 
		of  $(X, *_X)$ and  $(Y, *_Y)$, respectively.
		Define 2-cocycles of $X \times Y$ corresponding to $*_0$, $*_1$, respectively,  by 
		$\phi_0( (x_0, y_0), (x_1, y_1) ) = \phi_X(x_0, x_1)$ and 
		$\phi_1( (x_0, y_0), (x_1, y_1) ) = \phi_Y(y_0, y_1)$. 
		Then computations show that $(\phi_0, \phi_1)$ are mutually distributive.
	}
\end{example}
\begin{example}
	{\rm 
		The following construction, found in \cite{IshiiIwakiri}, 
		provides examples of mutually distributive 2-cocycles.
		Let $(X, *)$ be a rack, $\phi: X \times X \rightarrow A$ be a 2-cocycle, 
		and $(E=X \times A, \tilde{*})$ be the corresponding extension.
		Recall that  $*^n$ denotes  the $n$-fold leftmost product
		$x *^n y = (\cdots (x*y)*y)* \cdots * y$. 
		Then  the function  $\phi_n$ defined by 
		$$
		\phi_n(x,y)= \phi(x, y) + \phi(x*y, y) + \cdots + \phi(x *^{n-1} y, y)
		$$
		is a $2$-cocycle. 
		
		Let $(X, *_0=*^m, *_1=*^n)$ be the mutually distributive rack 
		defined in Example~\ref{ex:powerrack}, 
		and let $\phi_m$, $\phi_n$ be 2-cocycles  defined above.
		Then  $\phi_m$ and $\phi_n$ are mutually distributive.
		This is seen by the diagrammatic interpretation of parallel strings.
		
}\end{example}

\section{The doubling functor} \label{sec:dble} 

In this section we describe a construction called {\it doubling}, 
that is similar to the composition defined in Section~\ref{sec:nary} but defined on the product $X \times X$.
A diagrammatic interpretation is to take  parallel strings 
and provides a method of producing cocycle  invariants for framed links by means of ternary cohomology. 
The relation between the doubling and the composition in Section~\ref{sec:nary}
as well as implications to cohomology are discussed in the next section.

\subsection{Doubling binary operations}

%
%

\begin{lemma}\label{lem:dbleR}
	Let $(X, *_0, *_1)$ be mutually distributive racks.
	Define 
	the operation
	for $(x_0, x_1), (y_0, y_1) \in X \times X$ by
	$$ (x_0, x_1)  *  (y_0, y_1) := (  ( x_0 *_0  y_0) *_1 y_1, ( x_1 *_0  y_0) *_1 y_1 ) .$$
	Then $(X \times X, *)$ is a rack.
\end{lemma}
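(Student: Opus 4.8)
The plan is to check the two defining properties of a rack---self-distributivity and invertibility of the right translations---separately, using the observation that the new operation acts on both coordinates through a single common map. For $\mathbf{y}=(y_0,y_1)\in X\times X$ set $f_{\mathbf{y}}\colon X\to X$, $f_{\mathbf{y}}(x)=(x*_0 y_0)*_1 y_1$, so that the proposed operation becomes $(x_0,x_1)*\mathbf{y}=(f_{\mathbf{y}}(x_0),f_{\mathbf{y}}(x_1))$; that is, the right translation $R_{\mathbf{y}}$ on $X\times X$ is $f_{\mathbf{y}}$ applied in each coordinate. Writing $R^{0}_{y_0}$ and $R^{1}_{y_1}$ for the right translations of $*_0$ and $*_1$, we have $f_{\mathbf{y}}=R^{1}_{y_1}\circ R^{0}_{y_0}$. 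Since $(X,*_0)$ and $(X,*_1)$ are racks, both $R^{0}_{y_0}$ and $R^{1}_{y_1}$ are bijections of $X$, hence so is their composite $f_{\mathbf{y}}$; therefore $R_{\mathbf{y}}$ is a bijection of $X\times X$, which settles invertibility.

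It remains to verify self-distributivity. Because $R_{\mathbf{y}}$ acts by the same map $f_{\mathbf{y}}$ on each coordinate, the shelf identity $((x_0,x_1)*\mathbf{y})*\mathbf{z}=((x_0,x_1)*\mathbf{z})*(\mathbf{y}*\mathbf{z})$ holds for all $(x_0,x_1)$ if and only if the single-variable identity
$$ f_{\mathbf{z}}\bigl(f_{\mathbf{y}}(x)\bigr)=f_{\mathbf{y}*\mathbf{z}}\bigl(f_{\mathbf{z}}(x)\bigr) $$
holds for all $x\in X$, where $\mathbf{y}*\mathbf{z}=(f_{\mathbf{z}}(y_0),f_{\mathbf{z}}(y_1))$. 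Thus the whole lemma reduces to this one identity in $(X,*_0,*_1)$, and the product structure no longer plays any role.

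To establish it I would expand the left-hand side, $\bigl(((x*_0 y_0)*_1 y_1)*_0 z_0\bigr)*_1 z_1$, and rewrite it step by step using the four relations available in a mutually distributive rack: the self-distributivity of $*_0$, the self-distributivity of $*_1$, and the two mixed relations $(a*_0 b)*_1 c=(a*_1 c)*_0(b*_1 c)$ and $(a*_1 b)*_0 c=(a*_0 c)*_1(b*_0 c)$. Concretely, first applying the latter mixed relation to move $*_0 z_0$ inside past $*_1 y_1$, then $*_1$-self-distributivity to split off $*_1 z_1$, then $*_0$-self-distributivity on the innermost $(x*_0 y_0)*_0 z_0$, and finally the former mixed relation, transforms the left-hand side into $\bigl(((x*_0 z_0)*_1 z_1)*_0((y_0*_0 z_0)*_1 z_1)\bigr)*_1((y_1*_0 z_0)*_1 z_1)$, which is exactly $f_{\mathbf{y}*\mathbf{z}}(f_{\mathbf{z}}(x))$ written out. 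The main obstacle is purely bookkeeping: choosing the correct order in which to apply the four rewriting rules so that the intermediate expressions line up. Once the diagonal reduction above is in place, no genuinely new idea is needed beyond careful term-chasing.
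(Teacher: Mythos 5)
Your proof is correct, and the computational core (the four-step rewriting of $(((x*_0y_0)*_1y_1)*_0z_0)*_1z_1$ into $(((x*_0z_0)*_1z_1)*_0((y_0*_0z_0)*_1z_1))*_1((y_1*_0z_0)*_1z_1)$, using the two mixed relations and the two self-distributivity relations) is exactly the chain of equalities the paper uses. The organizational difference is that the paper carries both coordinates through the entire computation at once, verifying the shelf identity directly on pairs, whereas you first observe that $*$ acts coordinatewise by the single map $f_{\mathbf{y}}=R^{1}_{y_1}\circ R^{0}_{y_0}$ and reduce everything to the one-variable identity $f_{\mathbf{z}}\circ f_{\mathbf{y}}=f_{\mathbf{y}*\mathbf{z}}\circ f_{\mathbf{z}}$. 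That identity is precisely the ternary self-distributivity of $T(x,y_0,y_1)=(x*_0y_0)*_1y_1$, which the paper proves separately later (its Lemma on passing from binary to ternary racks), and your reduction amounts to the paper's subsequent observation that the doubling functor factors as $\mathcal{G}\circ\mathcal{F}$. So your route buys a cleaner, non-redundant argument that anticipates the paper's later structural decomposition, at the cost of a small extra remark (which you supply) that coordinatewise equality for all $(x_0,x_1)$ is equivalent to equality of the composite maps; you also make the invertibility of right translations explicit, which the paper merely calls straightforward.
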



\begin{figure}[htb]
    \begin{center}
   \includegraphics[width=1in]{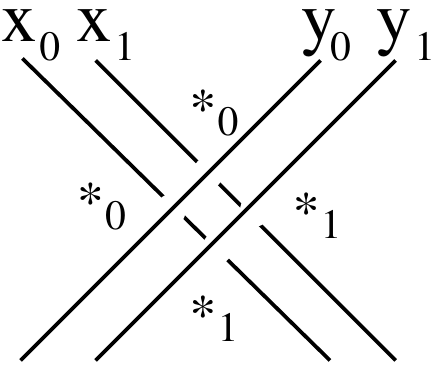}\\ 
    \caption{Diagrammatic representation of doubling    }\label{fig:diag2}
    \end{center}
\end{figure}

A diagrammatic representation of the preceding lemma is depicted in Figure~\ref{fig:diag2}, 
and the computations in its proof are facilitated by the corresponding type III Reidemeister move
with doubled strings.

\begin{definition}
	{\rm
		Let ${\mathcal R}_M$ be the category defined as follows.
		The objects consist of $(X, *_0, *_1)$, where $X$ is a set and $(*_0, *_1)$ is mutually distributive.
		For objects $(X, *_0, *_1)$ and $(X', *_0', *_1')$, a morphism 
		$f$ is a map $f: X \rightarrow X'$ that is a rack morphism for both $(*_0, *_0')$ and 
		$(*_1, *_1')$.
	}
\end{definition}

We observe that if $f: X \rightarrow X'$ is a morphism in the sense of this definition, then $f$ will automatically respect the mutual  distributivity. Specifically, simple computations imply the following.

\begin{lemma}\label{lem:RtoTfunctor}
	If $f: (X, *_0, *_1) \rightarrow (X', *_0', *_1')$ is a morphism in ${\mathcal R}_M$, 
	then it holds that 
	$$
	f(   (x*_0 y ) *_1 z ) =( f(x) *_1' f( z )) *_0' ( f(y) *_1' f(z) ) .
	$$
\end{lemma}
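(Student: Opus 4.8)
The plan is to unwind the left-hand side using the two morphism conditions that define a map in ${\mathcal R}_M$, and then to invoke mutual distributivity directly in the target object. Since $f$ is by hypothesis a rack morphism for both pairs $(*_0, *_0')$ and $(*_1, *_1')$, the identities $f(a *_0 b) = f(a) *_0' f(b)$ and $f(a *_1 b) = f(a) *_1' f(b)$ are available for all $a, b \in X$; these are the only facts about $f$ we shall need, and in particular no invertibility of right multiplications enters.

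First I would apply the $*_1$-morphism property to the outermost operation, writing $f((x *_0 y) *_1 z) = f(x *_0 y) *_1' f(z)$. Next I would apply the $*_0$-morphism property to the inner factor, obtaining $f(x *_0 y) *_1' f(z) = (f(x) *_0' f(y)) *_1' f(z)$. At this stage the entire expression lives in $X'$ and is assembled from the images $f(x), f(y), f(z)$, with no further reference to $f$ required.

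The final step is to use that $(*_0', *_1')$ is itself mutually distributive, so that the first equality of Definition~\ref{mut-bin-dis} applies verbatim in $X'$ with $f(x), f(y), f(z)$ in place of $x, y, z$: namely $(f(x) *_0' f(y)) *_1' f(z) = (f(x) *_1' f(z)) *_0' (f(y) *_1' f(z))$. Chaining the three equalities yields exactly the claimed identity.

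I do not anticipate any genuine obstacle, as the statement is a formal consequence of functoriality together with the mutual distributivity axiom holding in the codomain. The only point deserving minor care is the bookkeeping of the order in which the two morphism conditions are peeled off, so that after applying them the expression is left in precisely the form to which a single mutual distributivity relation in $X'$ can be applied.
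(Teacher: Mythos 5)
Your proof is correct and is precisely the ``simple computation'' the paper alludes to without writing out: peel off the two morphism identities to reach $(f(x) *_0' f(y)) *_1' f(z)$ and then apply the first mutual distributivity equality of Definition~\ref{mut-bin-dis} in the codomain. (The only equally valid variant is to apply mutual distributivity in $X$ first and then push the result through $f$; both routes are one line and use the same ingredients.)
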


Computations also show the following.

\begin{lemma}\label{lem:dbleRmap}
	Let $(X, *_0, *_1) $ and $(X', *_0', *_1')$ be two mutually distributive racks,
	and $(X \times X, *)$ and $(X' \times X', *')$ be racks as in Lemma~\ref{lem:dbleR}. 
	If $f: (X, *_0, *_1) \rightarrow (X', *_0', *_1')$ is a morphism in ${\mathcal R}_M$, then the map
	$F: (X \times X, *) \rightarrow (X' \times X', *')$ defined by $F( x, y) = (f(x), f(y))$ 
	is a rack morphism.
\end{lemma}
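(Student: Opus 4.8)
The plan is to verify the single homomorphism identity $F(P * Q) = F(P) *' F(Q)$ for arbitrary $P = (x_0, x_1)$ and $Q = (y_0, y_1)$ in $X \times X$ by direct computation, unwinding the definition of the doubled operation from Lemma~\ref{lem:dbleR} and then pushing $F$ through it componentwise. Since a rack morphism is only required to preserve the operation (bijectivity of right translations is an internal property of each rack and is already guaranteed by Lemma~\ref{lem:dbleR}), no further checks beyond this identity are needed.

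First I would expand the left-hand side using the formula defining $*$, namely
$$
F\big( (x_0, x_1) * (y_0, y_1) \big) = \big( f( (x_0 *_0 y_0) *_1 y_1 ),\ f( (x_1 *_0 y_0) *_1 y_1 ) \big),
$$
so the whole problem reduces to rewriting each coordinate. The key step is that $f$ is, by the definition of a morphism in ${\mathcal R}_M$, simultaneously a rack morphism for the pair $(*_0, *_0')$ and for the pair $(*_1, *_1')$. Applying the $*_1$-homomorphism property to the outermost operation and then the $*_0$-homomorphism property to the inner one yields, for the first coordinate,
$$
f\big( (x_0 *_0 y_0) *_1 y_1 \big) = f(x_0 *_0 y_0) *_1' f(y_1) = \big( f(x_0) *_0' f(y_0) \big) *_1' f(y_1),
$$
and identically for the second coordinate with $x_0$ replaced by $x_1$.

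Finally I would recognize the resulting pair as exactly $\big( f(x_0), f(x_1) \big) *' \big( f(y_0), f(y_1) \big) = F(x_0, x_1) *' F(y_0, y_1)$, again by the defining formula for the doubled operation $*'$ on $X' \times X'$, which completes the argument. I do not expect any genuine obstacle here: unlike the proof of Lemma~\ref{lem:dbleR}, this computation does not invoke the mutual distributivity relations at all, but only the two separate homomorphism properties of $f$. The only point demanding care is the bookkeeping of the order of operations, applying the $*_1'$-compatibility before the $*_0'$-compatibility so that the nested expression is peeled off from the outside in; once the indices are tracked correctly the two sides coincide verbatim.
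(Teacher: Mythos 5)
Your proof is correct and is precisely the routine computation the paper leaves implicit (it states only ``Computations also show the following'' and gives no written proof). Your expansion of both sides via the defining formula for the doubled operation, together with the observation that only the two separate homomorphism properties of $f$ --- and not mutual distributivity --- are needed, is exactly the intended argument.
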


\begin{definition}
	{\rm 
		The functor ${\mathcal D}_R$ from ${\mathcal R}_M$ to the category ${\mathcal R}$ of binary racks 
		defined on objects by 
		${\mathcal D}_R ( X, *_0, *_1)  = (X \times X, *)$ through
		Lemma~\ref{lem:dbleR}  and on morphisms by ${\mathcal D}_R (f) = f\times f$ through Lemma~\ref{lem:dbleRmap}, is called the {\it doubling} functor.
	}
\end{definition}

\begin{remark}
	{\rm
	The functor ${\mathcal D}_R$ is injective on objects and 
	morphisms, but not surjective on either.
}
\end{remark}


A direct computation gives the following lemma.

\begin{lemma}\label{lem:mutualext}
Let $(X, *_0, *_1)$ be a mutually distributive rack, and $( \phi_0, \phi_1 ) $ be mutually distributive rack 2-cocycles. 
Let $(E, \tilde{*}_\epsilon )$ be abelian extensions of $(X, *_\epsilon )$ with respect to $\phi_\epsilon$,
$$ (x, a) \ \tilde{*}_\epsilon \  (y, b)=(x *_\epsilon y, a + \phi_\epsilon (x, y) ) $$
for $\epsilon=0, 1$. 
Then $(E, \tilde{*}_0, \tilde{*}_1)$ is a mutually distributive rack.
\end{lemma}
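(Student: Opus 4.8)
The plan is to verify the three defining properties of a mutually distributive rack from Definition~\ref{mut-bin-dis} for the pair $(\tilde{*}_0, \tilde{*}_1)$ on $E = X \times A$. The self-distributivity and invertibility of each individual operation $\tilde{*}_\epsilon$ come essentially for free from the binary extension result, so the real content lies in checking the two cross distributivity equalities, and these will reduce precisely to the two cocycle conditions of Definition~\ref{mutualrack2cocy}.

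First I would dispose of the claim that each $(E, \tilde{*}_\epsilon)$ is itself a rack. Since $\phi_\epsilon$ is a $2$-cocycle for the rack $(X, *_\epsilon)$ by hypothesis, Lemma~\ref{lem:abext} applied to $(X, *_\epsilon)$ and $\phi_\epsilon$ states exactly that $(X \times A, \tilde{*}_\epsilon)$ is a rack. Thus $\tilde{*}_0$ and $\tilde{*}_1$ are both self-distributive with invertible right translations, and nothing further needs to be done for this part.

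The main step is a direct computation of the two cross equalities. Writing general elements of $E$ as $(x,a)$, $(y,b)$, $(z,c)$, I would expand both $[(x,a)\,\tilde{*}_0\,(y,b)]\,\tilde{*}_1\,(z,c)$ and $[(x,a)\,\tilde{*}_1\,(z,c)]\,\tilde{*}_0\,[(y,b)\,\tilde{*}_1\,(z,c)]$. Comparing first coordinates, the equality $(x*_0 y)*_1 z = (x*_1 z)*_0 (y*_1 z)$ is the first distributivity relation of the mutually distributive rack $(X, *_0, *_1)$, so they agree. Comparing second coordinates, after cancelling the common summand $a$, the identity reduces to
$$\phi_0(x,y) + \phi_1(x*_0 y, z) = \phi_1(x,z) + \phi_0(x*_1 z, y*_1 z),$$
which is precisely the first condition in Definition~\ref{mutualrack2cocy}. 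An entirely symmetric computation establishes the second cross equality $[(x,a)\,\tilde{*}_1\,(y,b)]\,\tilde{*}_0\,(z,c) = [(x,a)\,\tilde{*}_0\,(z,c)]\,\tilde{*}_1\,[(y,b)\,\tilde{*}_0\,(z,c)]$, where the first coordinates match by the second distributivity relation of $(X,*_0,*_1)$ and the second coordinates match by the second cocycle condition.

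There is no genuine obstacle here: the argument is a bookkeeping verification in which the two layers---the base operations on $X$ and the cocycle contributions in $A$---decouple cleanly, with the first coordinate governed by mutual distributivity of the operations and the second coordinate governed by mutual distributivity of the cocycles. The only point requiring minor care is to track the order in which the cocycle terms accumulate, so that the second coordinates are compared correctly; this is exactly what the two equations of Definition~\ref{mutualrack2cocy} are designed to guarantee.
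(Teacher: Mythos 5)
Your proof is correct and matches the paper's intent: the paper offers no written argument beyond the remark that ``a direct computation gives the following lemma,'' and your verification is exactly that computation, with the first coordinates handled by mutual distributivity of $(*_0,*_1)$ and the second coordinates reducing to the two conditions of Definition~\ref{mutualrack2cocy}. Nothing is missing.
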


\begin{theorem}\label{thm:doubleRcocy}
	Let $(X, *_0, *_1)$ and $(X \times X, *)$ be as described in Lemma~\ref{lem:dbleR}.
	Let $\phi_0, \phi_1$ be rack 2-cocycles of $(X, *_0)$ and $(X, *_1)$, respectively, that satisfy the mutually distributive 
	rack 2-cocycle condition. 
	Then 
	$$\phi( (x_0, x_1), (y_0, y_1) ) 
	= \phi_0 ( x_0, y_0)   + \phi_1 ( x_0 *_0 y_0, y_1)  + \phi_0 (x_1, y_0) + \phi_1 (x_1 *_0 y_0 , y_1) $$
	is a rack 2-cocycle of $(X \times X, *)$. This assignment induces a well defined map $\Theta: H^2_L(X)\longrightarrow H^2_R(X\times X)$, where the subscript $R$ indicates the binary rack cohomology. 
\end{theorem}

A proof will be given at the end of Section~\ref{sec:TtoR}. 
The right-hand side corresponds to Figure~\ref{fig:diag2}.
We call $\phi$ the {\it doubled rack 2-cocycle}.

%
%
%
%

\subsection{Doubling ternary  operations} 

In this subsection, we give a doubling construction for ternary racks.
The condition required for this construction differs from the mutual distributivity and defined as follows.

\begin{definition}\label{Termutuality}
{\rm
Let $T_0$ and $T_1$  be two ternary operations on a set $X$.
We say that $T_0$ and $T_1$ are {\it compatible} if they satisfy 
\begin{eqnarray*}
\lefteqn{
T_0 ( T_0 (x_0, y_0, y_1), z_0, z_1)  } \\
&=& 
T_0 ( T_0 (x_0, z_0, z_1),  T_0 (y_0, z_0, z_1),  T_1 (y_1, z_0, z_1)  ),  \\
\lefteqn{
T_1 ( T_1 (x_1, y_0, y_1), z_0, z_1 ) } \\
&=& 
T_1 ( T_1 (x_1, z_0, z_1),  T_0 (y_0, z_0, z_1),  T_1 (y_1, z_0, z_1)  ) .
\end{eqnarray*}
}
\end{definition}

A diagrammatic representation of the compatibility is depicted in Figure~\ref{fig:dbleT}. Observe that it corresponds to type III Reidemeister move for ribbons.

\begin{figure}[htb]
    \begin{center}
   \includegraphics[width=3in]{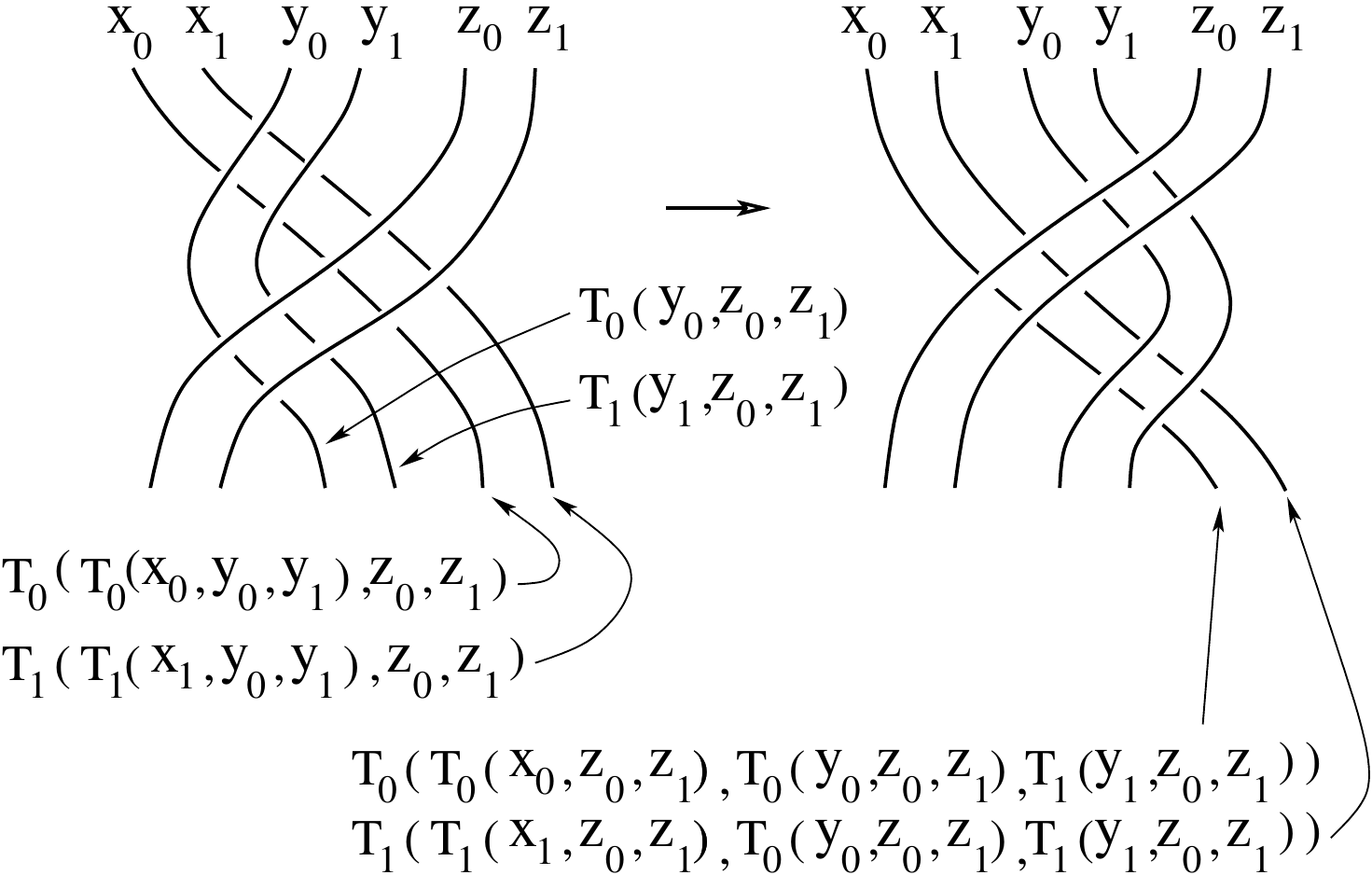}\\ 
    \caption{Diagrammatic representation of  compatible ternary rack operations }\label{fig:dbleT}
    \end{center}
\end{figure}

\begin{example}{\rm 
	Consider a ${\Lambda}$-module $M$ where
	${\Lambda}=\mathbb{Z}[t^{\pm 1},t'^{\pm 1},s,s']$. The following two ternary operation $T_0(x,y,z)=tx+sy+(1-t-s)z$ and $T_1(x,y,z)=t'x+s'y+(1-t'-s')z$ are compatible if and only if the following conditions hold 
		\begin{center} $$	\begin{array}{ll} 
	\begin{cases}
		(1-t-s)(t'-t)=0\\
		(1-t-s)(s'-s)=0
	\end{cases} \text{and}
	& 
\begin{cases}
		(1-t'-s')(t-t')=0\\
		(1-t'-s')(s-s')=0\ .
	\end{cases}
\end{array} 
$$ \end{center} 

For example, one can choose $M=\mathbb{Z}_{8}$ with $T_0(x,y,z)=3x+2y+4z$ and $T_1(x,y,z)=-x+2y$.
}
\end{example}

\begin{definition} \label{def:TC}
	{\rm
		The category ${\mathcal T}_C$ of compatible 
		 ternary distributive racks is defined as follows. The objects consist of triples $(X,T_0,T_1)$ where $X$ is a set and $(T_0,T_1)$ are  compatible ternary operations on $X$. A morphism between two objects $(X,T_0,T_1)$ and $(Y,T_0',T_1')$ is a map $f: X \rightarrow Y$ which is morphism in the ternary category for both $(T_0,T_0')$ and $(T_1,T_1')$. 
	}
\end{definition}

We observe that if $f: X \rightarrow X'$ is a morphism in the sense of Definition~\ref{def:TC}, then it will automatically respect the mutual ternary distributivity. Specifically, computations imply the following.

\begin{lemma}
If $f: (X, T_0, T_1) \rightarrow (X', T_0', T_1')$ is a morphism in ${\mathcal T}_C$,  
then it holds that 
\begin{eqnarray*}
\lefteqn{  f( T_0 ( T_0 (x_0, y_0, y_1), z_0, z_1 ) ) = } \\
&& 
\hspace{-10pt} T_0' ( T_0'  ( f( x_0) , f( z_0) , f( z_1) ) ,  
T_0' (f( y_0), f( z_0) , f( z_1)  ), 
 T_1'  (f ( y_1) , f( z_0) , f( z_1) )  ),\\
\lefteqn{f(T_1 ( T_1 (x_0, y_0, y_1), z_0, z_1 ) ) 
=}\\
& &
\hspace{-10pt} T_1' ( T_1' (f(x_0), f(z_0), f(z_1)),
 T_0' (f(y_0), f(z_0), f(z_1)), 
T_1' (f(y_1), f(z_0), f(z_1) )  ).
\end{eqnarray*}
\end{lemma}

\begin{theorem}\label{thm:dbleT}
Let $( T_0, T_1)$ be compatible ternary distributive operations on $X$. 
Then $T: X^2 \times X^2 \times X^2 \rightarrow X^2$ defined by 
\begin{eqnarray*}
\lefteqn{ T((x_0, x_1), (y_0, y_1),(z_0, z_1) ) }\\
&=& ( T_0 ( T_0 (x_0, y_0, y_1 ) , z_0, z_1) , T_1 ( T_1 (x_1, y_0, y_1 ) , z_0, z_1) ) 
\end{eqnarray*}
is a ternary distributive operation on $X^2$. 
\end{theorem}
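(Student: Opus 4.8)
The plan is to recognize the ternary operation $T$ as the one \emph{induced} (in the sense of Example~\ref{ex:ternary}) by a single auxiliary binary operation on $X^2$, which reduces the ternary distributivity of $T$ to the ordinary self-distributivity of that binary operation. Concretely, I would first define a binary operation $\cdot$ on $X^2$ by letting one pair act on another coordinatewise,
$$ (a_0, a_1) \cdot (w_0, w_1) := \bigl( T_0(a_0, w_0, w_1),\ T_1(a_1, w_0, w_1) \bigr), $$
and then observe, directly from the formula in the statement, that
$$ T(\mathbf{x}, \mathbf{y}, \mathbf{z}) = (\mathbf{x} \cdot \mathbf{y}) \cdot \mathbf{z} $$
for all $\mathbf{x}, \mathbf{y}, \mathbf{z} \in X^2$; this is exactly the shape $T(x,y,z) = (x * y) * z$ of the ternary operation attached to a binary shelf in Example~\ref{ex:ternary}.

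With this factorization in hand, the remaining task is to prove that $\cdot$ is right self-distributive, i.e. $(\mathbf{a} \cdot \mathbf{b}) \cdot \mathbf{c} = (\mathbf{a} \cdot \mathbf{c}) \cdot (\mathbf{b} \cdot \mathbf{c})$ for all pairs, after which the conclusion follows from the general fact of Example~\ref{ex:ternary} that $(x,y,z) \mapsto (x*y)*z$ is ternary distributive for any self-distributive $*$ — a fact whose verification uses only self-distributivity and no invertibility, so it applies verbatim here. The self-distributivity of $\cdot$ I would check one coordinate at a time. Expanding the first coordinates of $(\mathbf{a} \cdot \mathbf{b}) \cdot \mathbf{c}$ and of $(\mathbf{a} \cdot \mathbf{c}) \cdot (\mathbf{b} \cdot \mathbf{c})$, the required identity becomes precisely the first compatibility equation of Definition~\ref{Termutuality}; the second coordinate reproduces the second compatibility equation. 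Thus each coordinate equality is literally one of the two hypotheses, and no further manipulation is needed.

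The one place where attention is required is lining up the asymmetric compatibility conditions with the coordinate expansions — in particular, noticing that the third input of $T_0$ appearing in the first coordinate of $(\mathbf{a} \cdot \mathbf{c}) \cdot (\mathbf{b} \cdot \mathbf{c})$ is the $T_1$-component $T_1(b_1, c_0, c_1)$ of $\mathbf{b} \cdot \mathbf{c}$, which is exactly why Definition~\ref{Termutuality} mixes $T_0$ and $T_1$ rather than using a single operation. I expect the genuine obstacle to be only combinatorial: if one foregoes the reduction and instead expands both sides of the ternary distributivity law of $T$ directly, the argument becomes a lengthy bookkeeping exercise, structurally identical to the proof of the binary doubling Lemma~\ref{lem:dbleR} but carried out with tripled strings, and corresponds to the Reidemeister-III move of Figure~\ref{fig:dbleT}. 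The factorization through $\cdot$ is designed precisely to avoid that bookkeeping, and it also makes transparent that only the compatibility of $T_0$ and $T_1$ is used.
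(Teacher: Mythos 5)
Your proposal is correct, but it takes a genuinely different route from the paper's own proof of Theorem~\ref{thm:dbleT}. The paper verifies the ternary distributivity of $T$ on $X^2$ directly, organizing the computation as a sequence of diagrammatic moves (Figures~\ref{fig:dbleIII} and~\ref{fig:dbleT}) in which the two compatibility relations of Definition~\ref{Termutuality} are applied several times in succession. You instead factor $T(\mathbf{x},\mathbf{y},\mathbf{z})=(\mathbf{x}\cdot\mathbf{y})\cdot\mathbf{z}$ through the coordinatewise binary operation $\cdot$ on $X^2$ — which is precisely the operation of Lemma~\ref{lem:TtoR}, i.e.\ the functor ${\mathcal G}$ — observe that the two coordinates of the self-distributivity identity for $\cdot$ are verbatim the two compatibility equations, and then invoke the binary-to-ternary construction of Lemma~\ref{lem:RtoT} (equivalently Example~\ref{ex:ternary}), correctly noting that its verification uses no invertibility. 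Your checks line up: the third slot of $T_0$ in the first coordinate of $(\mathbf{a}\cdot\mathbf{c})\cdot(\mathbf{b}\cdot\mathbf{c})$ is indeed $T_1(b_1,c_0,c_1)$, matching the mixed form of Definition~\ref{Termutuality}. What you have rediscovered is exactly the identity ${\mathcal D}_T={\mathcal F}\circ{\mathcal G}$, which the paper states and proves only later, as a separate proposition at the end of Section~\ref{sec:TtoR}; you are promoting that observation to be the proof of the theorem itself. Your route buys economy — each compatibility hypothesis is used exactly once, and all remaining bookkeeping is delegated to the already-uniform computation of Lemma~\ref{lem:RtoT} — at the cost of pulling forward material the paper only develops in Sections~\ref{sec:RtoT} and~\ref{sec:TtoR}; there is no circularity, since neither Lemma~\ref{lem:RtoT} nor the self-distributivity of $\cdot$ depends on Theorem~\ref{thm:dbleT}, but the paper's direct diagrammatic proof keeps the theorem self-contained within Section~\ref{sec:dble}.
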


\begin{proof}
It is enough to establish  
\begin{eqnarray*}
\lefteqn{
T ( T (  (x_0, x_1), (y_0, y_1), (z_0 , z_1) ), (u_0, u_1), (v_0, v_1) ) 
} \\
&=&
T (\  T (  (x_0, x_1),  (u_0, u_1), (v_0, v_1) ), \\
& & \hspace{.5in} T( (y_0, y_1), (u_0, u_1), (v_0, v_1) ) , T(  (z_0 , z_1), (u_0, u_1), (v_0, v_1) ) \ ) .
\end{eqnarray*}
A diagrammatic representation of this equality is depicted in Figure~\ref{fig:dbleIII}.
This diagrammatic equality follows from a sequence of moves depicted in Figure~\ref{fig:dbleT}.
Thus calculations are obtained by applications of defining relations of compatibility accordingly.
\end{proof}

\begin{figure}[htb]
    \begin{center}
   \includegraphics[width=3in]{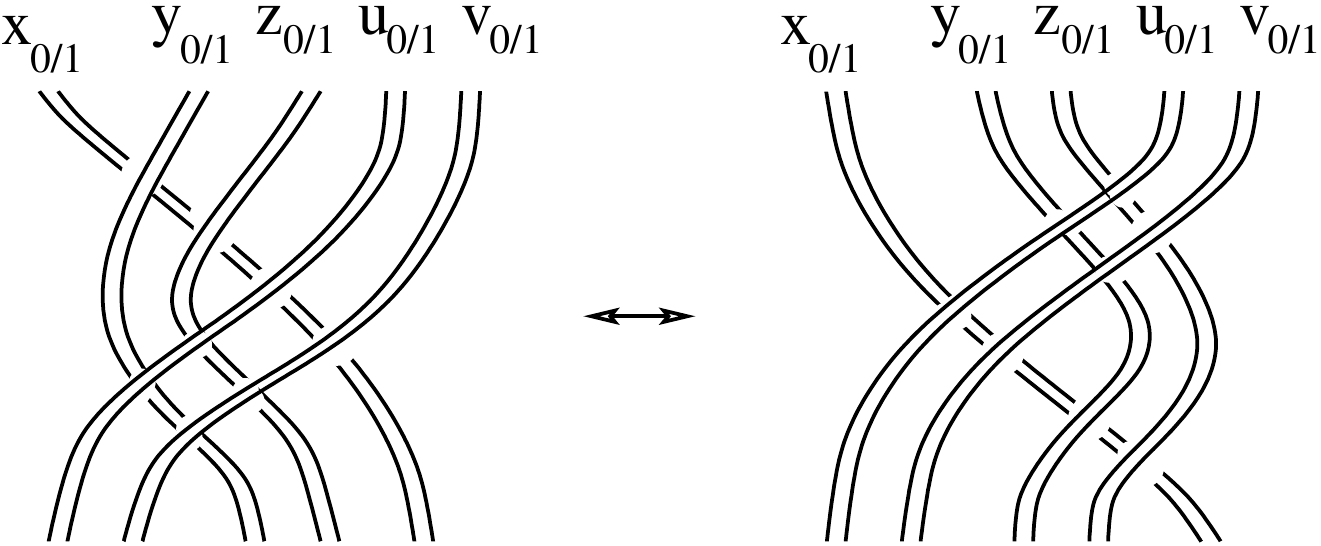}\\ 
    \caption{Diagrammatic representation of Theorem~\ref{thm:dbleT}  }\label{fig:dbleIII}
    \end{center}
\end{figure}


The following is analogous to Lemma~\ref{lem:dbleRmap} and is shown by direct computations.

\begin{lemma}
	Let $(X, T_0, T_1)$ and $(X', T_0', T_1')$  be sets with mutually distributive ternary operations, and 
	$(X \times X, T)$ and $(X' \times X', T')$ be ternary distributive racks constructed in Theorem~\ref{thm:dbleT}.
	If $f: (X, T_0, T_1) \rightarrow (X', T_0', T_1')$ is a morphism in ${\mathcal T}_C$,
	then $F$ defined from $f$ by $f \times f$  is a morphism of ${\mathcal T}_C$. 
\end{lemma}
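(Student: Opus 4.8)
The plan is to verify directly that $F = f\times f$ intertwines the doubled ternary operations $T$ and $T'$, i.e.\ that $F(T(\mathbf{x},\mathbf{y},\mathbf{z})) = T'(F(\mathbf{x}),F(\mathbf{y}),F(\mathbf{z}))$ for all $\mathbf{x}=(x_0,x_1)$, $\mathbf{y}=(y_0,y_1)$, $\mathbf{z}=(z_0,z_1)$ in $X\times X$ (here the natural target is the category $\mathcal{T}$ of ternary racks, into which both doubled objects land). The only input I would use is that $f$, being a morphism in $\mathcal{T}_C$, is simultaneously a ternary rack morphism for the pairs $(T_0,T_0')$ and $(T_1,T_1')$; notably the compatibility conditions of Definition~\ref{Termutuality} play no role here, since they were already consumed in Theorem~\ref{thm:dbleT} to guarantee that $T$ and $T'$ are genuine ternary distributive operations.

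First I would expand the left-hand side. By the formula for $T$ from Theorem~\ref{thm:dbleT} together with the definition $F(a,b)=(f(a),f(b))$,
\begin{align*}
F(T(\mathbf{x},\mathbf{y},\mathbf{z}))
&= \bigl( f(T_0(T_0(x_0,y_0,y_1),z_0,z_1)),\ f(T_1(T_1(x_1,y_0,y_1),z_0,z_1)) \bigr).
\end{align*}
The key observation is that the first slot involves only $T_0$ and the second only $T_1$, so each may be treated separately. Applying the $T_0$-morphism property of $f$ to the outer and then to the inner occurrence of $T_0$ converts the first entry into $T_0'(T_0'(f(x_0),f(y_0),f(y_1)),f(z_0),f(z_1))$, and the identical two-step application of the $T_1$-morphism property turns the second entry into $T_1'(T_1'(f(x_1),f(y_0),f(y_1)),f(z_0),f(z_1))$.

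Finally I would recognize the resulting pair as precisely $T'(F(\mathbf{x}),F(\mathbf{y}),F(\mathbf{z}))$, upon unwinding the definition of $T'$ applied to the triple $(f(x_0),f(x_1))$, $(f(y_0),f(y_1))$, $(f(z_0),f(z_1))$. This establishes the intertwining identity and hence that $F$ is a morphism of ternary racks. There is essentially no obstacle: the argument is a bookkeeping exercise in applying the homomorphism property of $f$ twice per coordinate. The only point requiring care is to pair each coordinate with the correct operation ($T_0$ in the first, $T_1$ in the second), and to note that, unlike the binary doubling of Lemma~\ref{lem:dbleRmap} where both $*_0$ and $*_1$ appear within a single coordinate, here the two operations remain decoupled across coordinates, which makes the verification even more transparent.
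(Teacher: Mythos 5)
Your proof is correct and matches the paper's intent exactly: the paper omits the details, stating only that the lemma ``is shown by direct computations'' in analogy with Lemma~\ref{lem:dbleRmap}, and your coordinatewise two-step application of the $T_0$- and $T_1$-morphism properties of $f$ is precisely that computation. Your observation that the compatibility conditions are not needed here (having already been consumed in Theorem~\ref{thm:dbleT}) and that the conclusion is really that $F$ is a morphism of ternary racks is also a fair and accurate reading of the statement.
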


\begin{definition}
	{\rm 
		We denote the functor  from $ {\mathcal T}_C$  
		 to the category of ternary racks defined on objects  by
		${\mathcal D}_T ( X, T_0, T_1)  = (X \times X, T)$ and on morphisms by ${\mathcal D}_T (f)=f \times f$,
		and call it {\it doubling}.
	}
\end{definition}

\begin{remark}
	{\rm 
		The functor  ${\mathcal D}_T $ is injective on both objects and morphisms, but is not surjective on either.
	}
\end{remark}

\begin{definition} \label{def:compaticocy}
{\rm 

Let $(T_0, T_1)$ be compatible ternary distributive operations on $X$.
Let $\psi_0$, $\psi_1$ be $2$-cocycles with respect to $T_0$ and $T_1$, respectively.
Then the following  are called the compatibility conditions for $\psi_0$ and $\psi_1$:
\begin{eqnarray*}
\lefteqn{	 \psi_0 (x_0, y_0, y_1) + \psi_1 (T_1(x_1, y_0,y_1), z_0, z_1) }\\
&=&	  \psi_1 (x_1, z_0, z_1)+
 \psi_0 (T_0(x_0,z_0, z_1), T_0(y_0, z_0, z_1), T_1(y_1, z_0, z_1)), \\
\lefteqn{  \psi_1 (x_1, y_0, y_1) + 
	 \psi_0 (T_0(x_0, y_0, y_1), z_0, z_1) } \\\
&=&  \psi_0 (x_0, z_0, z_1) + 
	 \psi_1 (T_1(x_0,z_0, z_1), T_0(y_0, z_0, z_1), T_1(y_1, z_0, z_1)). 	
\end{eqnarray*}

}
\end{definition}

\begin{theorem}\label{thm:dbleTcocy}
	Let $(T_0, T_1)$ be compatible ternary distributive  operations on $X$.
	Let $T$ be the doubled ternary operation defined in Theorem~\ref{thm:dbleT}.
Let $\psi_0$, $\psi_1$ be $2$-cocycles with respect to $T_0$ and $T_1$, respectively, that satisfy 
 the compatibility  condition defined in Definition~\ref{def:compaticocy}.
	Then 
	\begin{eqnarray*}
	\lefteqn{
	\psi ( (x_0, x_1), (y_0, y_1), (z_0, z_1)  ) }\\
	&=& \psi_0 ( x_0, y_0, y_1) + \psi_1 (x_1, y_0, y_1) \\
	& & + \psi_0 ( T_0( x_0, y_0, y_1),  z_0, z_1) + \psi_1 ( T_1 ( x_1, y_0, y_1) , z_0, z_1) 
	\end{eqnarray*}
	is a ternary rack 2-cocycle of $(X \times X, T)$. 
\end{theorem}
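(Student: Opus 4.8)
The plan is to verify the ternary $2$-cocycle condition for $\psi$ with respect to the doubled operation $T$ directly, by substituting the definitions of $T$ (from Theorem~\ref{thm:dbleT}) and of $\psi$ into the condition and then transforming the left-hand side into the right-hand side using the two ternary $2$-cocycle conditions for $\psi_0$ and $\psi_1$, the compatibility conditions of Definition~\ref{def:compaticocy}, and the operation compatibility of Definition~\ref{Termutuality}.

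Write $X=(x_0,x_1)$, $Y=(y_0,y_1)$, $Z=(z_0,z_1)$, $U=(u_0,u_1)$, $V=(v_0,v_1)$. The observation that organizes the whole computation is that $\psi(X,Y,Z)$ is the total weight accumulated along the two coordinate strands as $X$ is first acted on by $Y$ and then by $Z$: the first coordinate passes from $x_0$ to $T_0(x_0,y_0,y_1)$ to $T_0(T_0(x_0,y_0,y_1),z_0,z_1)$, accumulating $\psi_0(x_0,y_0,y_1)+\psi_0(T_0(x_0,y_0,y_1),z_0,z_1)$, while the second coordinate accumulates the analogous $\psi_1$-weight. Thus $\psi$ is exactly the weight one reads off the doubled (two-strand) ternary tangle, and the cocycle condition is the assertion that this total weight is unchanged under the ternary type-III move of Figure~\ref{fig:dbleIII}.

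First I would expand the left-hand side $\psi(X,Y,Z)+\psi(T(X,Y,Z),U,V)$ and the right-hand side $\psi(X,U,V)+\psi(T(X,U,V),T(Y,U,V),T(Z,U,V))$ into eight summands each, recording carefully the doubly nested arguments produced by the two applications of $T$. Next I would apply the $\psi_0$- and $\psi_1$-cocycle conditions to the pairs of terms describing the evolution of a single strand, converting, for instance, $\psi_0(x_0,y_0,y_1)+\psi_0(T_0(x_0,y_0,y_1),z_0,z_1)$ into $\psi_0(x_0,z_0,z_1)$ plus a deeper-level term, and similarly on the right. Throughout this step the operation-compatibility relations of Definition~\ref{Termutuality} are used to rewrite nested arguments $T_\epsilon(T_{\epsilon'}(\cdots))$ so that the arguments appearing on the two sides agree.

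The main obstacle is the treatment of the mixed terms, namely those in which the first coordinate (governed by $T_0$, $\psi_0$) and the second coordinate (governed by $T_1$, $\psi_1$) interact, because each acting pair feeds its two entries simultaneously into both operations. These cross terms do not cancel using the single-operation cocycle conditions alone; they are precisely what the two compatibility conditions of Definition~\ref{def:compaticocy} are designed to reconcile, and matching them requires care in aligning the $T_0$/$T_1$-nested arguments. Once every summand on the left has been transported to the corresponding summand on the right by this combination of cocycle, compatibility, and operation-compatibility relations, the two sides coincide, which shows that $\psi$ is a ternary rack $2$-cocycle of $(X\times X,T)$. Equivalently, one may run the verification diagrammatically, assigning the weight $\psi_\epsilon$ to each ternary crossing of strand $\epsilon$ in Figure~\ref{fig:dbleIII} and checking invariance of the total weight under the sequence of moves of Figure~\ref{fig:dbleT}, exactly as in the proof of Theorem~\ref{thm:dbleT}.
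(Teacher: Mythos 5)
Your plan is sound and would succeed, but it is organized quite differently from the paper's proof. You propose one monolithic direct verification: expand both sides of the ternary $2$-cocycle condition for $\psi$ into eight summands and transport the left side to the right using the $\psi_\epsilon$-cocycle conditions, the cocycle compatibility of Definition~\ref{def:compaticocy}, and the operation compatibility of Definition~\ref{Termutuality}. The paper instead factors the statement through its two earlier cocycle theorems: it first applies Theorem~\ref{thm:TRcocy} to get the binary $2$-cocycle $\phi(X,Y)=\psi_0(x_0,y_0,y_1)+\psi_1(x_1,y_0,y_1)$ on $(X\times X,*)={\mathcal G}(X,T_0,T_1)$ (this step consumes exactly the two compatibility conditions of Definition~\ref{def:compaticocy}), and then, since $*$ and $\phi$ are mutually distributive with themselves, applies Theorem~\ref{thm:RTcocy} to $(X\times X,*,*)$ with the pair $(\phi,\phi)$; unwinding $\psi(X,Y,Z)=\phi(X,Y)+\phi(X*Y,Z)$ via ${\mathcal F}\circ{\mathcal G}={\mathcal D}_T$ gives precisely the stated formula. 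What the paper's route buys is exactly the elimination of the step you flag as the main obstacle --- the bookkeeping of mixed $T_0/T_1$-nested arguments --- because that alignment is already packaged in the proofs of Theorems~\ref{thm:TRcocy} and~\ref{thm:RTcocy}. What your route buys is self-containedness (no reliance on the functor composition identities), at the price of an eight-term computation that your write-up describes but does not actually carry out; to make it a complete proof you would need to exhibit the term-by-term matching, or else observe, as the paper does, that the work has already been done elsewhere.
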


A proof will be given at the end of Section~\ref{sec:TtoR}.
We call $\psi$ the {\it doubled ternary rack 2-cocycle}.

\section{From  binary racks to ternary racks and back }\label{sec:TtoR} 

In this section we provide relations among constructions of self-distributive operations discussed so far.
To simplify the arguments, we focus on binary and ternary operations. 
Specifically, we observe that the doubling functors  of binary (resp. ternary) operations factor through ternary
(resp. binary) operations.
This main result of the section is stated in Proposition~\ref{prop:factor}.
Furthermore corresponding constructions of 2-cocycles are given,
and proofs of  
Theorems~\ref{thm:doubleRcocy} and \ref{thm:dbleTcocy} are provided at the end of the section.
We start with defining a functor for the construction given in Proposition~\ref{prop:TmTn}.

\begin{definition}\label{def:bintoter}
	{\rm
		The assignment of objects   defined by Proposition~\ref{prop:TmTn} when $W_n$ and $W_m$ are binary operations (hence the obtained $W:= T$ is ternary), is denoted 
		${\mathcal F}(X, *_0, *_1) = (X, T)$. 
		This assignment on objects can be extended on morphisms as the identity, to define a functor ${\mathcal F}: {\mathcal R}_M \rightarrow {\mathcal T}$, from the category of mutually distributive binary racks (see Section~\ref{sec:dble}), to the category of ternary racks, using Lemma~\ref{lem:RtoTfunctor}.
	}
\end{definition}

By definition ${\mathcal F}$ is injective and surjective on morphisms.
Computations give the following.

\begin{lemma}\label{lem:triple}
	Let $\{ *, *_0, *_1 \}$ be a mutually distributive binary set.
	Let $(X, T)={\mathcal F}(X, *_0, *_1)$. Then $\{ *, T \}$ are mutually distributive.
\end{lemma}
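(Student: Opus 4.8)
The plan is to unwind the $n$-ary mutual distributivity of Definition~\ref{def:TmTn} in the special case $(W_m, W_n) = (*, T)$, where $*$ is binary (so $m = 2$) and $T(x, y_0, y_1) = (x *_0 y_0) *_1 y_1$ is the ternary operation produced by $\mathcal{F}$. Since $*$ is self-distributive by hypothesis and $T$ is ternary distributive by Lemma~\ref{lem:RtoT}, both are already distributive operations of the required arities, so all that remains is to verify the two cross-relations of Definition~\ref{def:TmTn}.

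First I would translate these two relations into explicit identities. Reading off $\mathbf{y}, \mathbf{v} \in X^{m-1} = X$ as single elements $y, v$ and $\mathbf{z}, \mathbf{u} \in X^{n-1} = X^2$ as pairs $(z_0, z_1), (u_0, u_1)$, and recalling that $W_m(\mathbf{u}, \mathbf{v})$ abbreviates the componentwise product $(u_0 * v, u_1 * v)$, the first relation becomes
\[ T(x * y, z_0, z_1) = T(x, z_0, z_1) * T(y, z_0, z_1) \]
and the second becomes
\[ T(x, u_0, u_1) * v = T(x * v, u_0 * v, u_1 * v). \]

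Next I would prove each identity by a short chain of rewrites using only the pairwise mutual distributivities $(*, *_0)$ and $(*, *_1)$ of Definition~\ref{mut-bin-dis}. For the first, I expand the left side as $((x * y) *_0 z_0) *_1 z_1$, push $*$ past $*_0$ to obtain $[(x *_0 z_0) * (y *_0 z_0)] *_1 z_1$, and then push $*$ past $*_1$ to land on the right side. For the second, I expand the left side as $[(x *_0 u_0) *_1 u_1] * v$, push $*$ past $*_1$, and then push $*$ past $*_0$, arriving at $[(x * v) *_0 (u_0 * v)] *_1 (u_1 * v)$, which is exactly the right side.

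The step I expect to demand the most care is the purely notational one: correctly extracting the arities and the componentwise conventions from Definition~\ref{def:TmTn} and selecting, at each rewrite, the correct one of the two equalities in Definition~\ref{mut-bin-dis}. It is worth emphasizing that neither the self-distributivity of $*_0$ and $*_1$ nor the mutual distributivity of the pair $(*_0, *_1)$ is invoked in either verification; only the interaction of $*$ with $*_0$ and with $*_1$ enters. The compatibility of $*_0$ with $*_1$ plays its role earlier, namely in guaranteeing via Lemma~\ref{lem:RtoT} that $T$ is a ternary distributive operation to begin with.
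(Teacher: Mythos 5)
Your proof is correct and is exactly the direct computation the paper omits (it only says ``Computations give the following''): you correctly instantiate Definition~\ref{def:TmTn} with $m=2$, $n=3$, and each of the two cross-identities follows from one application of the $(*,*_0)$ mutual distributivity and one of the $(*,*_1)$ mutual distributivity, with the $(*_0,*_1)$ compatibility entering only through Lemma~\ref{lem:RtoT}. Nothing further is needed.
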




Next we define the opposite construction of binary from ternary operations.

\begin{lemma}\label{lem:TtoR}
Let $T_0$ and $T_1$ be a two compatible ternary rack operations. 
Then the binary operation on the cartesian product $X \times X$ defined by  
$$
(x_0, x_1)*(y_0, y_1):=(T_0(x_0, y_0, y_1), T_1(x_1, y_0,y_1)) = (x_0 *_0 {\bf y}, x_1 *_1 {\bf y}) 
$$
gives a rack structure $(X \times X, *)$. 
\end{lemma}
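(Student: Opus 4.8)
The plan is to verify the two rack axioms directly: self-distributivity of $*$ and bijectivity of the right translations. Both reduce cleanly to the hypotheses on $(T_0, T_1)$, so no auxiliary constructions are needed.

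First I would expand both sides of the self-distributivity equation
$$[(x_0, x_1) * (y_0, y_1)] * (z_0, z_1) = [(x_0,x_1)*(z_0,z_1)] * [(y_0,y_1)*(z_0,z_1)]$$
using only the definition of $*$. On the left, applying the definition twice gives the pair whose first coordinate is $T_0(T_0(x_0, y_0, y_1), z_0, z_1)$ and whose second coordinate is $T_1(T_1(x_1, y_0, y_1), z_0, z_1)$. On the right, one first computes $(x_0,x_1)*(z_0,z_1) = (T_0(x_0,z_0,z_1), T_1(x_1,z_0,z_1))$ and $(y_0,y_1)*(z_0,z_1) = (T_0(y_0,z_0,z_1), T_1(y_1,z_0,z_1))$, and then multiplies these two pairs. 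The key observation is that the second and third slots of the outer multiplication on the right are filled by the $T_0$- and $T_1$-images of $(y_0, y_1)$, so the first coordinate of the right-hand side becomes $T_0(T_0(x_0,z_0,z_1), T_0(y_0,z_0,z_1), T_1(y_1,z_0,z_1))$ and the second coordinate becomes $T_1(T_1(x_1,z_0,z_1), T_0(y_0,z_0,z_1), T_1(y_1,z_0,z_1))$.

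At this point the two coordinates decouple: equality in the first coordinate is precisely the first compatibility identity of Definition~\ref{Termutuality}, and equality in the second coordinate is precisely the second. Thus self-distributivity holds under our hypothesis. The only thing to be careful about is confirming that the mixed term $T_1(y_1, z_0, z_1)$ (rather than $T_0$) occupies the third argument in both identities; this is exactly what the definition of compatibility supplies, so the matching is immediate. The expansion of self-distributivity is the only computational step, and it presents no genuine obstacle beyond careful bookkeeping.

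Finally, for the remaining rack axiom I would note that the right translation $R_{(y_0,y_1)}$ acts coordinatewise, sending $(x_0, x_1)$ to $(T_0(x_0, y_0, y_1), T_1(x_1, y_0, y_1))$; that is, it is the product of the maps $x_0 \mapsto T_0(x_0, y_0, y_1)$ and $x_1 \mapsto T_1(x_1, y_0, y_1)$. Since $T_0$ and $T_1$ are ternary rack operations, each of these is the invertible map $R_{y_0, y_1}$ of the respective ternary rack in the sense of Definition~\ref{ternaryquandledef}, so $R_{(y_0,y_1)}$ is a bijection of $X \times X$, which completes the verification.
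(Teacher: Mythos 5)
Your proposal is correct and follows essentially the same route as the paper's proof: expand both sides of the self-distributivity equation coordinatewise, observe that each coordinate equality is exactly one of the two compatibility identities of Definition~\ref{Termutuality}, and obtain bijectivity of $R_{(y_0,y_1)}$ from the coordinatewise invertibility of the maps $T_i(-,y_0,y_1)$. No gaps.
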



\begin{definition}
{\rm 

The functor defined by Lemma~\ref{lem:TtoR} is denoted by ${\mathcal G}: {\mathcal T}_C \rightarrow {\mathcal R}$, where 
${\mathcal G}(X, T_0, T_1) = (X\times X, *)$ on objects, and ${\mathcal G}(f) = f \times f$ on morphisms. 

}
\end{definition}

Observe that ${\mathcal G}$  is injective on objects and 
on morphisms. 

\begin{proposition}
The functor ${\mathcal G}$ is not surjective on objects. 
\end{proposition}

\begin{proof}
Consider the binary rack structure on $\mathbb{Z} \times \mathbb{Z}$ defined by 
$$(x_0,x_1)*(y_0,y_1) = (x_0+x_1, x_1).$$
 This rack is not in the image of ${\mathcal G}$ since the first entry depends on both $x_0$ and $x_1$. 
\end{proof}



\begin{theorem}\label{thm:TRcocy}
	\begin{sloppypar}
Let $(X, T_0, T_1)$ be an object in ${\mathcal T}_C$, 
and $(X\times X, *) = {\mathcal G}(X, T_0, T_1) $ be as  in Lemma~\ref{lem:TtoR}.
Suppose $\psi_0$ and $\psi_1$ are compatible  ternary $2$-cocycles of respectively $(X,T_0)$ and $(X,T_1)$. 
Then 
$$\phi( (x_0, x_1), (y_0, y_1) ) := \psi_0 (x_0, y_0, y_1) + \psi_1 (x_1, y_0, y_1) $$
defines a 2-cocycle $\phi $ of $(X \times X, *)$. 
\end{sloppypar}
\end{theorem}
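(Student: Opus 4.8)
The plan is to verify the binary rack 2-cocycle condition for $\phi$ directly on $(X \times X, *)$. Writing $X = (x_0, x_1)$, $Y = (y_0, y_1)$, $Z = (z_0, z_1)$, I must show
\[
\phi(X, Y) + \phi(X * Y, Z) = \phi(X, Z) + \phi(X * Z, Y * Z).
\]
First I would substitute the definition of $\phi$ together with the formula $X * Y = (T_0(x_0, y_0, y_1), T_1(x_1, y_0, y_1))$ from Lemma~\ref{lem:TtoR}, so that each of the four occurrences of $\phi$ splits into a $\psi_0$-summand (reading the first coordinates) and a $\psi_1$-summand (reading the second coordinates). This produces four terms on each side.

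Explicitly, the left-hand side becomes
\begin{multline*}
\psi_0(x_0, y_0, y_1) + \psi_1(x_1, y_0, y_1) \\
+ \psi_0(T_0(x_0, y_0, y_1), z_0, z_1) + \psi_1(T_1(x_1, y_0, y_1), z_0, z_1),
\end{multline*}
while the right-hand side reads off as
\begin{multline*}
\psi_0(x_0, z_0, z_1) + \psi_1(x_1, z_0, z_1) \\
+ \psi_0(T_0(x_0, z_0, z_1), T_0(y_0, z_0, z_1), T_1(y_1, z_0, z_1)) \\
+ \psi_1(T_1(x_1, z_0, z_1), T_0(y_0, z_0, z_1), T_1(y_1, z_0, z_1)).
\end{multline*}

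The key step is to pair up these eight terms using the compatibility conditions of Definition~\ref{def:compaticocy}. I expect the first compatibility condition to supply exactly
\begin{align*}
\psi_0(x_0, y_0, y_1) + \psi_1(T_1(x_1, y_0, y_1), z_0, z_1) &= \psi_1(x_1, z_0, z_1) \\
&\quad + \psi_0(T_0(x_0, z_0, z_1), T_0(y_0, z_0, z_1), T_1(y_1, z_0, z_1)),
\end{align*}
pairing the first and last left terms against the second and third right terms, and the second compatibility condition to supply
\begin{align*}
\psi_1(x_1, y_0, y_1) + \psi_0(T_0(x_0, y_0, y_1), z_0, z_1) &= \psi_0(x_0, z_0, z_1) \\
&\quad + \psi_1(T_1(x_1, z_0, z_1), T_0(y_0, z_0, z_1), T_1(y_1, z_0, z_1)),
\end{align*}
pairing the remaining two terms on each side. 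Adding these two equations yields the desired identity term by term, so no further cancellation is needed.

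The main obstacle is purely bookkeeping: keeping straight the crossing of the indices $0$ and $1$ forced by $*$, where the first coordinate is governed by $T_0$ acting on $x_0$ and the second by $T_1$ acting on $x_1$, while both coordinates share the same trailing pair $(y_0, y_1)$ or $(z_0, z_1)$. I would also note that, perhaps surprisingly, the individual ternary 2-cocycle conditions for $\psi_0$ and $\psi_1$ are not invoked in this computation; only the two compatibility conditions are used. Finally, I would cross-check the arguments against Definition~\ref{def:compaticocy}, making sure the final term of the second compatibility condition carries $T_1(x_1, z_0, z_1)$ so that it matches the $\psi_1$ term on the right-hand side above, and I would corroborate the pairing by a diagrammatic argument in the spirit of Figures~\ref{fig:dbleT} and~\ref{fig:dbleIII}.
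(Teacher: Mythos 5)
Your proof is correct and follows essentially the same route as the paper's: expand both sides of the binary $2$-cocycle identity into four $\psi_0$/$\psi_1$ terms each and observe that summing the two compatibility conditions of Definition~\ref{def:compaticocy} gives exactly the required equality. Your explicit pairing of the eight terms, the remark that the individual ternary $2$-cocycle conditions are never used, and the observation that the last term of the second compatibility condition should read $T_1(x_1,z_0,z_1)$ rather than $T_1(x_0,z_0,z_1)$ all just make precise what the paper leaves implicit in the sentence ``the compatibility conditions of $\psi_0$ and $\psi_1$ show that LHS and RHS coincide.''
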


\begin{proof}
We  check that $\phi$ satisfies the following equation
\begin{eqnarray*}
\lefteqn{ \phi((x_0, x_1), (y_0, y_1))+ \phi((x_0, x_1)* (y_0, y_1), (z_0,z_1)) } \\
&=& \phi((x_0, x_1), (z_0, z_1))+ \phi((x_0, x_1)* (z_0, z_1), (y_0, y_1) * (z_0,z_1)).\end{eqnarray*}
We have
\begin{eqnarray*}
	{\rm LHS} &=& \psi_0 (x_0, y_0, y_1) + \psi_1 (x_1, y_0, y_1) + \\
	&& \psi_0 (T_0(x_0, y_0, y_1), z_0, z_1) + \psi_1 (T_1(x_1, y_0,y_1), z_0, z_1), 	\\
	{\rm RHS} &=& \psi_0 (x_0, z_0, z_1) + \psi_1 (x_1, z_0, z_1)+\\
	&& \psi_0 (T_0(x_0,z_0, z_1), T_0(y_0, z_0, z_1), T_1(y_1, z_0, z_1)) +\\
	&& \psi_1 (T_1(x_1,z_0, z_1), T_0(y_0, z_0, z_1), T_1(y_1, z_0, z_1)). 	
\end{eqnarray*}
The compatibility  conditions of $\psi_0$ and $\psi_1$  show that LHS and RHS coincide.
\end{proof}

%

%
  
The constructions are summarized as follows.

\begin{proposition}\label{prop:factor}
	It holds that ${\mathcal G }\circ  {\mathcal F} = {\mathcal D}_R $ and 
	${\mathcal F} \circ {\mathcal G} =  {\mathcal D}_T $. 
\end{proposition}

\begin{proof}
Let $(X, *_0, *_1)$ be a set with mutually distributive rack operations.
Let $(X, T)={\mathcal F}(X, *_0, *_1)$.
Then by definition $T(x, y_0, y_1)=(x *_0 y_0 ) *_1 y_1$.
Lemma~\ref{lem:TtoR} implies that $(X \times X, *)={\mathcal G}(X, T, T)$ is a rack, since $T$ is mutually distributive over itself.
One computes
\begin{eqnarray*}
\lefteqn{ {\mathcal G}(X, T, T) = (x_0, x_1)*( y_0, y_1)} \\
&=& (T (x_0, y_0, y_1), T(x_1, y_0, y_1) ) \\
&=&  ((x_0  *_0 y_0 ) *_1 y_1, (x_1  *_0 y_0 ) *_1 y_1 ) \\
&=&  {\mathcal D}_R( X, *_0, *_1) 
\end{eqnarray*}
as desired.

Let $(X, T_0, T_1)$ be a set with mutually distributive ternary rack operations.
Let $(X\times X, *)={\mathcal G}(X, T_0, T_1)$.
Then by definition $(x_0, x_1) * ( y_0, y_1)=( T_0 (x_0, y_0, y_1), T_1( x_1, y_0 ,y_1 ) ) $.
Since $*$  is mutually distributive over itself, 
we have that $(X \times X, T)={\mathcal F}(X \times X, *, * )$ is a rack, as in Definition~\ref{def:bintoter}.
One computes
\begin{eqnarray*}
\lefteqn{ {\mathcal F}(X \times X, *, * ) = T ( (x_0, x_1), ( y_0, y_1), (z_0, z_1)  )   }\\
&=&  [ (x_0, x_1) * ( y_0, y_1) ] * (z_0, z_1) \\
&=& (  T_0 (x_0, y_0, y_1), T_1( x_1, y_0 ,y_1 ) ) * (z_0, z_1) \\ 
&=& ( T_0(  T_0 (x_0, y_0, y_1), z_0, z_1), T_1( T_1( x_1, y_0 ,y_1 ) , z_0, z_1 ) \\
&=& {\mathcal D}_T ( X, T_0, T_1)
\end{eqnarray*}
as desired.
\end{proof}

\begin{proof}[Proof of Theorem~\ref{thm:doubleRcocy}]
Let $( *_0, *_1)$ be  mutually distributive rack operations on $X$.
Let $(X , T )={\mathcal F}(X, *_0, *_1)$.
We have that $(X, T)$ is a ternary rack.
Let $\phi_0, \phi_1$ be mutually distributive rack $2$-cocycles of 
$(X, *_0)$ and $(X, *_1)$, respectively.
Then by Theorem~\ref{thm:TRcocy}, 
$$\psi( x, y_0, y_1 ) := \phi_0 (  x, y_0 ) + \phi_1 (x *_0 y_0 , y_1) $$
is a ternary rack $2$-cocycle of $(X , T)$. 
Since $T$ is compatible over itself, 
\begin{eqnarray*}
\lefteqn{ ({\mathcal G}\circ {\mathcal F})  (X , *_0, *_1) ( (x_0, x_1), (y_0, y_1), (z_0, z_1)) }\\
&=& {\mathcal G} ( X \times X, T, T)  ( (x_0, x_1), (y_0, y_1), (z_0, z_1)) \\
&=& (T (T(x_0, y_0, y_1), z_0, z_1 ), T (T(x_1, y_0, y_1), z_0, z_1 )
\end{eqnarray*}
is a rack operation by Theorem~\ref{thm:dbleT}.
Then Theorem~\ref{thm:TRcocy} applied to $(X \times X, T, T)$ with mutually distributive cocycles 
$(\psi, \psi)$ implies that 
\begin{eqnarray*}
\lefteqn{
\phi( ( x_0, x_1), (y_0, y_1) ) }\\
&=&
\psi ( x_0, y_0, y_1 ) + \psi( x_1, y_0, y_1 ) \\
&=& 
\phi_0 (  x, y_0 ) + \phi_1 (x *_0 y_0 , y_1) +  \phi_0 (  x_1 , y_0 ) + \phi_1 (x_1  *_0 y_0 , y_1) 
\end{eqnarray*}
as desired. To show that the assignment $\Theta(\phi_0,\phi_1) = \psi$ passes to cohomology, it is enough to show that if $(\phi_0,\phi_1) = \delta_L^1 f$, we have that $\Theta (\delta_L^1 f) = \delta^1_R g$, for some $1$-cochain $g$. It is easy to see that the map $g (x_0,x_1) := f(x_0) + f(x_1)$ does indeed serve the purpose.
\end{proof}

\begin{proof}[Proof of Theorem~\ref{thm:dbleTcocy}]
Let $(T_0, T_1)$ be compatible ternary distributive operations on $X$, 
and $(X \times X, *)={\mathcal G}(X, T_0, T_1)$.
By  Lemma~\ref{lem:TtoR}, $(X \times X , *)$ is a rack.
Let $\psi_0, \psi_1$ be compatible ternary $2$-cocycles of 
$(X, T_0)$ and $(X, T_1)$, respectively.
Then by Theorem~\ref{thm:TRcocy}, 
$$\phi( (x_0, x_1), (y_0, y_1) ) := \psi_0 (x_0, y_0, y_1) + \psi_1 (x_1, y_0, y_1) $$
is a rack $2$-cocycle of $(X \times X , *)$. 
Since $*$ is mutually distributive over itself, 
\begin{eqnarray*}
\lefteqn{ ({\mathcal F}\circ {\mathcal G})  (X, T_0, T_1) ( (x_0, x_1), (y_0, y_1), (z_0, z_1)) }\\
&=&  T( (x_0, x_1), (y_0, y_1), (z_0, z_1)) \\
&=&   [  (x_0, x_1) *  (y_0, y_1) ] * (z_0, z_1)
\end{eqnarray*}
is a ternary rack operation by Lemma~\ref{lem:RtoTfunctor}. 
Then Theorem~\ref{lem:labcohmaps} applied to $(X \times X, *, *)$ with mutually distributive cocycles 
$(\phi, \phi)$ implies that 
\begin{eqnarray*}
\lefteqn{
\psi( (x_0, x_1), (y_0, y_1), (z_0, z_1)) }\\
&=&
\phi(  (x_0, x_1), (y_0, y_1) ) + \phi( (x_0, x_1) *  (y_0, y_1),  (z_0, z_1)  ) \\
&=& 
\phi(  (x_0, x_1), (y_0, y_1) ) + \phi( ( T_0 (x_0, y_0, y_1),  T_1 ( x_1, y_0, y_1) ), (z_0, z_1)  ) \\
&=&  \psi_0 (x_0, y_0, y_1) + \psi_1 (x_1, y_0, y_1) \\
& & +  \ \psi_0( ( T_0 (x_0, y_0, y_1), z_0, z_1)  + \psi_1(   T_1 ( x_1, y_0, y_1) ), z_0, z_1  )
\end{eqnarray*}
as desired.
\end{proof}

\section{Internalization of higher order self-distributivity}\label{Inter} 

We begin this section with the definition of $n$-ary self-distributive object in a symmetric monoidal category, providing therefore a higher arity version of the work in \cite{CCES}.  We will use the symbol $\boxtimes$ to indicate the tensor product in the symmetric monoidal category $\mathcal{C}$, not to confuse the general setting with the standard tensor product in vector spaces, to be found in the examples. We remind the reader first, that a symmetric monoidal category is a monoidal category $\mathcal{C}$ together with a family of isomorphisms $\tau_{X,Y}:X\boxtimes Y\longrightarrow Y\boxtimes X$, natural in $X$ and $Y$, satisfying the following conditions (Section 11 in \cite{MacL}). The hexagon:
$$
\begin{tikzcd}
&	X\boxtimes (Y\boxtimes Z)\arrow[dr,"\tau{X,Y\boxtimes Z}"] &\\
(X\boxtimes Y)\boxtimes Z \arrow[ur,"\alpha_{X, Y, Z}"]\arrow[d,swap,"\tau_{X,Y}\boxtimes \mathbbm{1}"] &    & (Y\boxtimes Z)\boxtimes X\arrow[d,"\alpha_{Y,Z,X}"]\\
(Y\boxtimes X)\boxtimes Z\arrow[dr,swap,"\alpha_{Y,X,Z}"] &     &  Y\boxtimes(Z\boxtimes X)\\
   & Y\boxtimes (X\boxtimes Z)\arrow[ur,swap,"\mathbbm{1}\boxtimes \tau_{X,Z}"]    &
	\end{tikzcd}
$$
is commutative for all objects $X$,$Y$ and $Z$ in $\mathcal{C}$, where $\alpha_{X, Y, Z}$ indicates the associator of the monoidal category. We further have the following identity for all objects $X$ and $Y$: 
$$
\tau_{Y,X}\tau_{X,Y} = \mathbbm{1}_{X\boxtimes Y}.
$$
 For the sake of simplicity, we  work on a strict symmetric monoidal category for the rest of the paper
 and therefore do not keep track of the bracketing. We recall also that a comonoid in a symmetric monoidal category is an object $X\in \mathcal{C}$ endowed with morphisms $\Delta:X \longrightarrow X\boxtimes X$, called comultiplication or diagonal, and $\epsilon:X \longrightarrow I$, called counit, where $I$ is the unit object of the monoidal category. The comultiplication and the counit satisfy the usual coherence diagrams analogous to the coalgebra axioms. In virtue of the coassociative axiom we can inductively define an $n$-diagonal $\Delta_n:X \longrightarrow X^{\boxtimes n}$ by the assignment: $\Delta_n = (\Delta\boxtimes \mathbbm{1})\Delta_{n-1}$, for all $n\in\mathbb{N}$. Let us define the isomorphism $\tau_{i,i+1}: X^{\boxtimes n} \longrightarrow X^{\boxtimes n}$ as $\tau_{i,i+1} = \mathbbm{1}^{\boxtimes (i-1)}\boxtimes \tau_{X,X}\boxtimes \mathbbm{1}^{\boxtimes (n-i-1)}$. It is easy to verify that the morphisms $\tau_{i,i+1}$ satisfy the relations of the transposition $(i,i+1)$ in $\mathbbm{S}_n$, the symmetric group on $n$ letters. We therefore obtain, for every object $X$, an action of $\mathbbm{S}_n$ on $X^{\boxtimes n}$, by mapping $(i,i+1)$ to $\tau_{i,i+1}$, and extending to a homomorphism of groups between $\mathbbm{S}_n$ and ${\rm Aut}(X^{\boxtimes n})$, the automorphism group of $X^{\boxtimes n}$. In particular we will make use of the automorphism of $X^{\boxtimes n^2}$, corresponding to the permutation 
 \begin{eqnarray*}
\shuffle_n &=&  (2,n+1)(3,2n+1)\cdots (n,(n-1)n+1) \\
& & (n+3,2n+2)(n+4,3n+2)\cdots (2n,(n-1)n+2) \\
& & \cdots ((n-2)n+n,(n-1)n+n-1). 
\end{eqnarray*}
We are ready now to define $n$-ary self-distributive objects in a symmetric monoidal category $\mathcal{C}$. 

\begin{definition}\label{def:n-arysd}
	\rm{
	An $n$-ary self-distributive object in a symmetric monoidal category $\mathcal{C}$ is a pair $(X,W)$, where $X$ is a comonoid object in $\mathcal{C}$ and $W:X^{\boxtimes n} \longrightarrow X$ is a morphism making the following diagram commute:
\begin{center}
	\begin{tikzcd}
		&X^{\boxtimes n^2}\arrow{dl}[swap]{\shuffle_n} & &X^{\boxtimes (2n-1)}\arrow{ll}[swap]{\mathbbm 1^{\boxtimes n}\boxtimes \Delta_n^{\boxtimes (n-1)}}\arrow {rd}{W\boxtimes \mathbbm{1}^{\boxtimes (n-1)}} &  \\
		 X^{\boxtimes n^2}\arrow{dd}[swap]{W\boxtimes \cdots \boxtimes W} & & & & X^{\boxtimes n}\arrow{dd}{W}\\
		 & & & &\\
		 X^{\boxtimes n}\arrow{rrrr}[swap]{W}& & & &X 
		\end{tikzcd} 
	\end{center}
}
	\end{definition}


 \begin{example}
 	\rm{
 	Clearly, any $n$-ary rack is an $n$-ary self-distributive object in the symmetric monoidal category of sets, with $\tau$ and $\Delta$ defined in the obvious way. 
 }
 \end{example}

In the rest of this section we will make use of Sweedler notation in the following form: $\Delta (x) = x^{(1)}\otimes x^{(2)}$. 

\begin{example}\label{qheap}
	\rm{
Let $H$ be an involutive Hopf algebra, i.e. $S^2 = \mathbbm{1}$. Define a ternary operation $T:H\otimes H\otimes H \longrightarrow H$ by the assignment $T(x\otimes y \otimes z) = xS(y)z$, extended by linearity, where we use juxtaposition as a shorthand to indicate the multiplication $\mu$ of $H$ and $S$ is the antipode. By direct computation on tensor monomials we obtain, for the left hand side of ternary self-distributivity:
\begin{eqnarray*}
	\lefteqn{T(T(x\otimes y \otimes z)\otimes u \otimes z)}\\
	&=& T(xS(y)z\otimes u\otimes v)\\
	&=& xS(y)zS(u)v.
	\end{eqnarray*}
The right hand side is: 
\begin{eqnarray*}
\lefteqn{TT^{\otimes 3} \shuffle_3 (\mathbbm{1}^{\otimes 3}\otimes (\Delta\otimes \mathbbm{1})\Delta \otimes (\Delta\otimes \mathbbm{1})\Delta)(x\otimes y \otimes z \otimes u \otimes v)}\\
&=& TT^{\otimes 3}((x\otimes u^{(11)} \otimes v^{(11)}\otimes (y\otimes u^{(12)}\otimes v^{(12)})\otimes (z\otimes u^{(2)}\otimes v^{(2)}))\\
&=& T(xS(u^{(11)})v^{(11)})\otimes yS(u^{(12)})v^{(12)}\otimes zS(u^{(2)})v^{(2)})\\
&=& xS(u^{(11)})v^{(11)}S(yS(u^{(12)})v^{(12)})zS(u^{(2)})v^{(2)}\\
&=& xS(u^{(11)})v^{(11)}S(v^{(12)})S^2(u^{(12)})S(y)zS(u^{(2)})v^{(2)}\\
&=& xS(u^{(11)})\epsilon(v^{(1)}\cdot 1)S^2(u^{(12)})S(y)zS(u^{(2)})v^{(2)}\\
&=& xS(\epsilon(u^{(1)})\cdot 1)S(y)z S(u^{(2)})\epsilon(v^{(1)})v^{(2)}\\
&=& xS(y)zS(\epsilon(u^{(1)})u^{(2)})v\\
&=& xS(y)zS(u)v.
\end{eqnarray*}
Note that we have used the fact that $H$ is involutive in the sixth equality, to obtain $S(u^{(12)})u^{(11)} = u^{(1)}$. This ternary structure is the Hopf algebra analogue of the heap operation in group theory, which is known to be ternary self-distributive. We also observe that $H$ being involutive is a parallel to the operation of taking inverses, obviously involutive as well.
} 
\end{example}
\begin{figure}[htb]\label{fig:catsd}
	\begin{center}
		\includegraphics[width=2.5in]{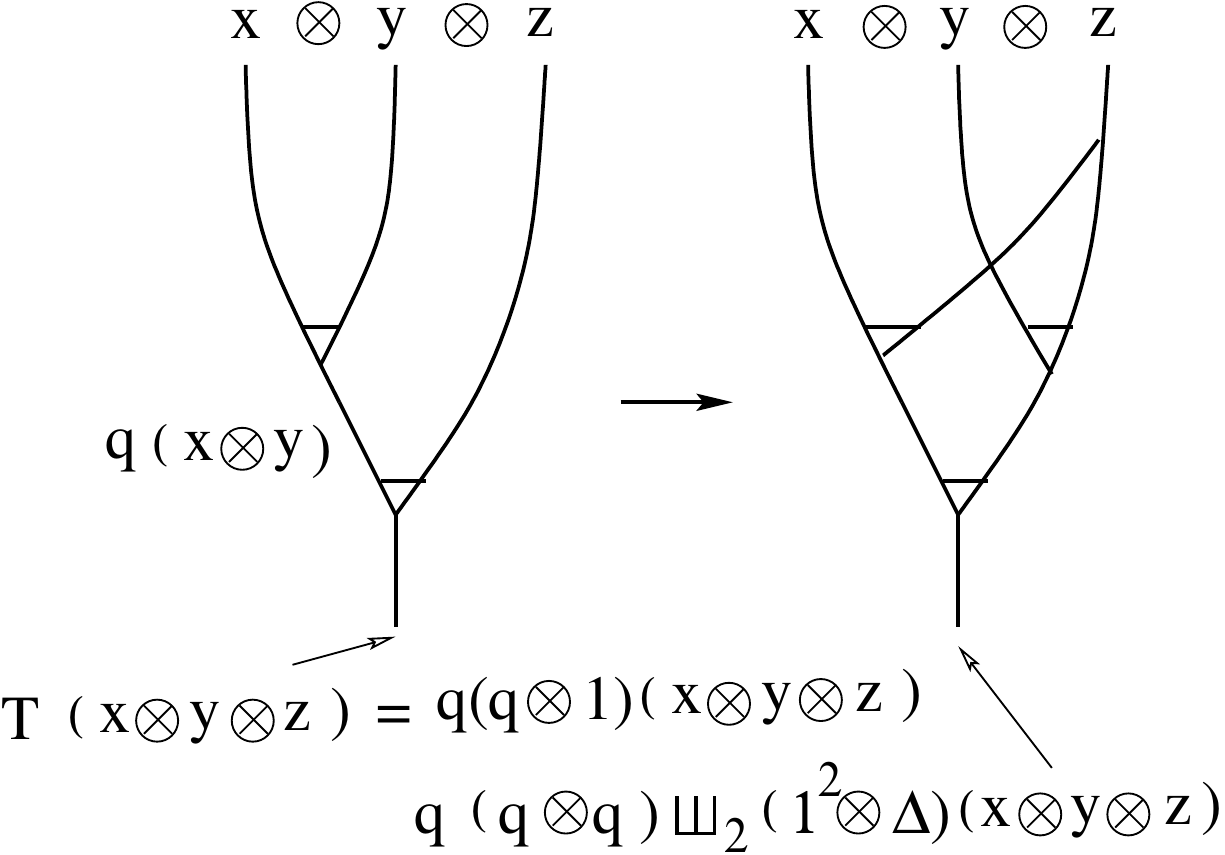}\\
		\caption{Diagrammatic  representation of categorical distributivity}\label{fig:q}
	\end{center}
\end{figure}

In Figure~\ref{fig:q}, a diagrammatic representation of categorical distributivity is depicted.
It is read from top to bottom, where the top 3 end points of both sides represent $x \otimes y \otimes z$,
a trivalent vertex with a small triangle represents a self-distributive morphism $q: X\otimes X \rightarrow X$,
and the left-hand side represents $T=q(q\otimes {\mathbbm 1})$.

Given a symmetric monoidal category $\mathcal{C}$, we define categories $n\mathcal{SD}$, for each $n\in \mathbb{N}$, as follows. The objects are $n$-ary self-distributive objects in $\mathcal{C}$, as in Definition \ref{def:n-arysd}. Given two objects $(X,q)$ and $(X',q')$, we define the morphism class between them to be the class of morphism $f:X\longrightarrow X'$ in $\mathcal{C}$ , such that $f\circ q = q'\circ f^{\boxtimes n}$. In particular we define $\mathcal{BSD} = 2\mathcal{SD}$ and $\mathcal{TSD} = 3\mathcal{SD}$, $\mathcal{B}$ and $\mathcal{T}$ standing for binary and ternary, respectively.

We will make use of the following results in Theorem \ref{co-dbl}. 
\begin{lemma}\label{lem:switchco}
	Let $\mathcal{C}$ be a strict symmetric monoidal category. Suppose $(X,\Delta, \epsilon)$ is a comonoid in $\mathcal{C}$. Then the switching morphism and the comultiplication commute. More specifically, we have: $\Delta \boxtimes \mathbbm{1} \circ \tau_{X,Y} = \tau_{X,Y^{\boxtimes 2}}\circ \mathbbm{1} \boxtimes \Delta$. 
\end{lemma}
This lemma is represented in Figure \ref{fig:tauDQ} (A) below.
\begin{proof}
	Consider the following diagram:
$$
\begin{tikzcd}
X\boxtimes Y \arrow[r,"\tau_{X,Y}"]\arrow[dr,swap,"\mathbbm{1}\boxtimes \Delta"] & Y\boxtimes X \arrow[r,"\Delta \boxtimes \mathbbm{1}"] & Y^{\boxtimes 2} \boxtimes X\\
& X\boxtimes Y^{\boxtimes 2}\arrow[r,swap,"\tau_{X,Y} \boxtimes\mathbbm{1}"]\arrow[ur,swap,"\tau_{X,Y^{\boxtimes 2}}"] & Y\boxtimes X\boxtimes Y \arrow[u,swap,"\mathbbm{1}\boxtimes \tau_{X,Y}"]
\end{tikzcd}
$$
		The outmost diagram commutes by naturality of switching map $\tau_{X,Y}$ with respect to $X$ and $Y$. The lower right triangle commutes by the hexagon axiom:
		$$
		\begin{tikzcd}
		&	X\boxtimes (Y\boxtimes Y)\arrow[dr,equal]\arrow[dl,swap,"\tau{X,Y^{\boxtimes 2}}"] &\\
		 (Y\boxtimes Y)\boxtimes X\arrow[d,equal] &    & (X\boxtimes Y)\boxtimes Y\arrow[d,"\tau_{X,Y} \boxtimes \mathbbm{1}"]\\
		Y\boxtimes (Y\boxtimes X) &     &  (Y\boxtimes X)\boxtimes Y\arrow[dl,equal]\\
		& Y\boxtimes (X\boxtimes Y)\arrow[ul,"\mathbbm{1}\boxtimes \tau_{X,Y}"]   &
		\end{tikzcd}
		$$
		The assertion now follows.
	\end{proof}
\begin{lemma}\label{lem:switchq}
	Let $(X,q)$ be a binary self-distributive object in a strict symmetric monoidal category $\mathcal{C}$. Then the switching morphism and the self-distributive operation commute. More specifically, we have: $\tau_{X,Y}\circ q\boxtimes \mathbbm{1} = \mathbbm{1}\boxtimes q\circ \tau_{X^{\boxtimes 2},Y}$. 
	\end{lemma}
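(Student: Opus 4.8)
The plan is to recognize the claimed identity as a direct instance of the naturality of the symmetry $\tau$, applied with the morphism $q \colon X^{\boxtimes 2} \to X$ in the first slot and the identity $\mathbbm{1}_Y$ in the second. I would begin by recalling that in a symmetric monoidal category the family $\tau_{-,-}$ is natural in both arguments, so that for arbitrary morphisms $f \colon A \to A'$ and $g \colon B \to B'$ the square
$$
\begin{tikzcd}
A \boxtimes B \arrow[r, "f \boxtimes g"] \arrow[d, swap, "\tau_{A,B}"] & A' \boxtimes B' \arrow[d, "\tau_{A',B'}"] \\
B \boxtimes A \arrow[r, "g \boxtimes f"] & B' \boxtimes A'
\end{tikzcd}
$$
commutes, i.e. $\tau_{A',B'} \circ (f \boxtimes g) = (g \boxtimes f) \circ \tau_{A,B}$.

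Next I would specialize to $A = X^{\boxtimes 2}$, $A' = X$, $f = q$, and $B = B' = Y$, $g = \mathbbm{1}_Y$. Under this substitution the upper horizontal arrow becomes $q \boxtimes \mathbbm{1}$, the lower horizontal arrow becomes $\mathbbm{1} \boxtimes q$, and the two vertical arrows become $\tau_{X^{\boxtimes 2}, Y}$ and $\tau_{X, Y}$. Reading off the commutativity of the resulting square then yields exactly
$$
\tau_{X,Y} \circ (q \boxtimes \mathbbm{1}) = (\mathbbm{1} \boxtimes q) \circ \tau_{X^{\boxtimes 2}, Y},
$$
which is the assertion of the lemma.

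The main point to be careful about, though it is hardly a genuine obstacle, is the strictness convention: since we work in a strict symmetric monoidal category the associators are suppressed, so objects such as $(X \boxtimes X) \boxtimes Y$ and $X^{\boxtimes 2} \boxtimes Y$ are treated as identical and the single naturality square above is well defined with no coherence isomorphisms inserted. I would also remark that, in contrast to Lemma~\ref{lem:switchco}, neither the hexagon axiom nor the self-distributivity of $q$ is used here; the identity in fact holds for any morphism out of $X^{\boxtimes 2}$, and only the self-distributive context in which it will later be invoked (in Theorem~\ref{co-dbl}) motivates stating it for $q$. If a pictorial argument were preferred, one would read $\tau$ as a crossing of two strands and observe that sliding the trivalent vertex representing $q$ through the crossing is precisely the move encoded by naturality.
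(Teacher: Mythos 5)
Your proof is correct. The paper itself only says the argument is ``similar to Lemma~\ref{lem:switchco} and left to the reader,'' and the template it points to combines \emph{componentwise} naturality of $\tau_{X,Y}$ with the hexagon axiom, decomposing the switching morphism on the two-fold tensor factor into elementary transpositions before sliding the structure map through. You take a shorter route: you invoke the naturality of $\tau$ as a natural transformation of bifunctors, i.e.\ the single square $\tau_{A',B'}\circ(f\boxtimes g)=(g\boxtimes f)\circ\tau_{A,B}$ with $f=q$, $g=\mathbbm{1}_Y$, which yields the identity in one step. This is legitimate under the paper's own definition (``a family of isomorphisms $\tau_{X,Y}$ natural in $X$ and $Y$''), and your observations are worth keeping: the hexagon axiom and the self-distributivity of $q$ play no role, and the identity holds for an arbitrary morphism out of $X^{\boxtimes 2}$. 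What your approach buys is economy and generality; what the paper's template buys is uniformity with the diagrammatic proofs elsewhere (Figure~\ref{fig:tauDQ}), where crossings are always resolved into adjacent transpositions. One might add that your two-variable naturality square, specialized instead to $f=\mathbbm{1}_X$ and $g=\Delta$, also proves Lemma~\ref{lem:switchco} directly, so the hexagon detour there is likewise avoidable.
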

This lemma is represented in Figure \ref{fig:tauDQ} (B) below.
 \begin{proof}
 	Similar to Lemma \ref{lem:switchco} and left to the reader. 
 	\end{proof}
 \begin{figure}[htb]
 	\begin{center}
 		\includegraphics[width=2.5in]{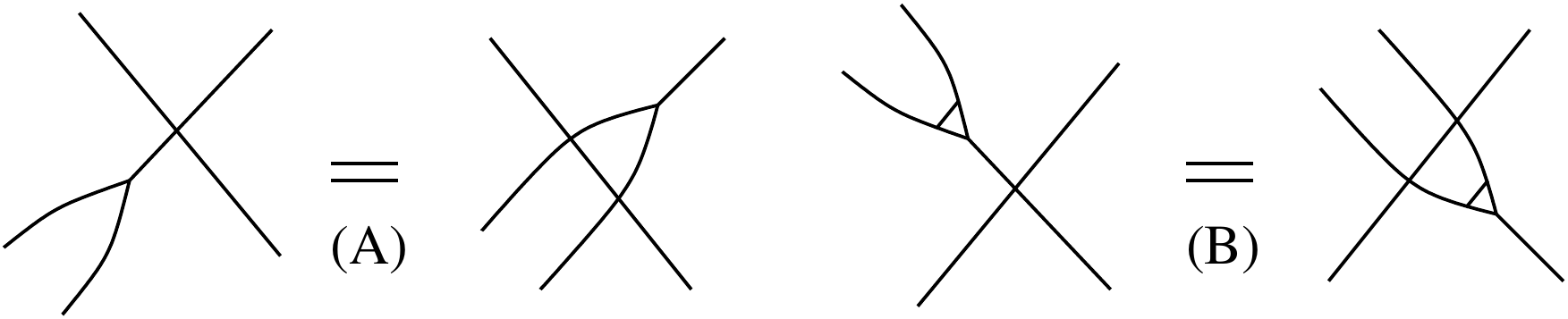}\\
 		\caption{The switching morphism commutes with comultiplication and binary self-distributive operation}\label{fig:tauDQ}
 	\end{center}
 \end{figure}
In general, the following result is useful to produce ternary self-distributive objects in  the category of vector spaces, starting from binary self-distributive objects (see also \cite{CCES}). Compare it to the construction of Section \ref{sec:nary}. 
\begin{theorem}\label{co-dbl}
	Let $(X,\Delta)$ be a comonoid in a (strict) symmetric monoidal category $\mathcal{C}$ (e.g. a coalgebra in the category of vector spaces). Let $q:X\boxtimes X\longrightarrow X$ be a morphism such that $(X,q)$ is a binary self-distributive object in $\mathcal{C}$. Then the pair $(X, T)$, where $T = q(q\boxtimes \mathbbm{1})$, defines a ternary self-distributive object in $\mathcal{C}$. The construction defines a functor $\mathcal{F}:\mathcal{BSD}\rightarrow \mathcal{TSD}$.
	\end{theorem}
\begin{proof} 
	We define $\mathcal{F}$ on objects as $\mathcal{F}(X,q) = (X, T)$ and as the identity on morphisms. To show that the map $T = q(q\boxtimes \mathbbm{1})$ is ternary self-distributive, we can proceed as in Figure \ref{fig:qq}. 
	In the left column of the figure, the part of the diagram representing each $T = q(q\boxtimes \mathbbm{1})$ are
	indicated  by dotted circles.
	 At each step we are using the definition of $T$, the binary self-distributivity of $q$ and Lemmas \ref{lem:switchco} and \ref{lem:switchq}. If $f:(X,q) \longrightarrow (Y,q')$ is a morphism in $\mathcal{BSD}$, we can show that $f$ is also a morphism in $\mathcal{TSD}$ between $(X,T=q(q\boxtimes \mathbbm{1}))$ and $(Y, T' = q'(q'\boxtimes \mathbbm{1}))$ via the following diagram: 
	$$	\begin{tikzcd}
	X\boxtimes X\boxtimes X  \arrow[d,"f\boxtimes f\boxtimes f"] \arrow [r,"q\boxtimes \mathbbm{1}"] & X\boxtimes X \arrow[d,"f\boxtimes f"] \arrow [r,"q"]& X\arrow[d,"f"]\\
	Y\boxtimes Y\boxtimes Y \arrow[r,"q'\boxtimes \mathbbm{1}"] & Y\boxtimes Y \arrow[r,"q'"] &Y
	\end{tikzcd}$$
	where the commutativity of the left and right squares is just a restatement of the fact that $q$ is a morphism in $\mathcal{BSD}$. The consequent commutativity of the outer rectangle means that $f$ is a morphism in $\mathcal{TSD}$ as well. It is also clear that $\mathcal{F}$ preserves composition of morphisms. 
	\end{proof}

\begin{figure}[htb]
	\begin{center}
		\includegraphics[width=3in]{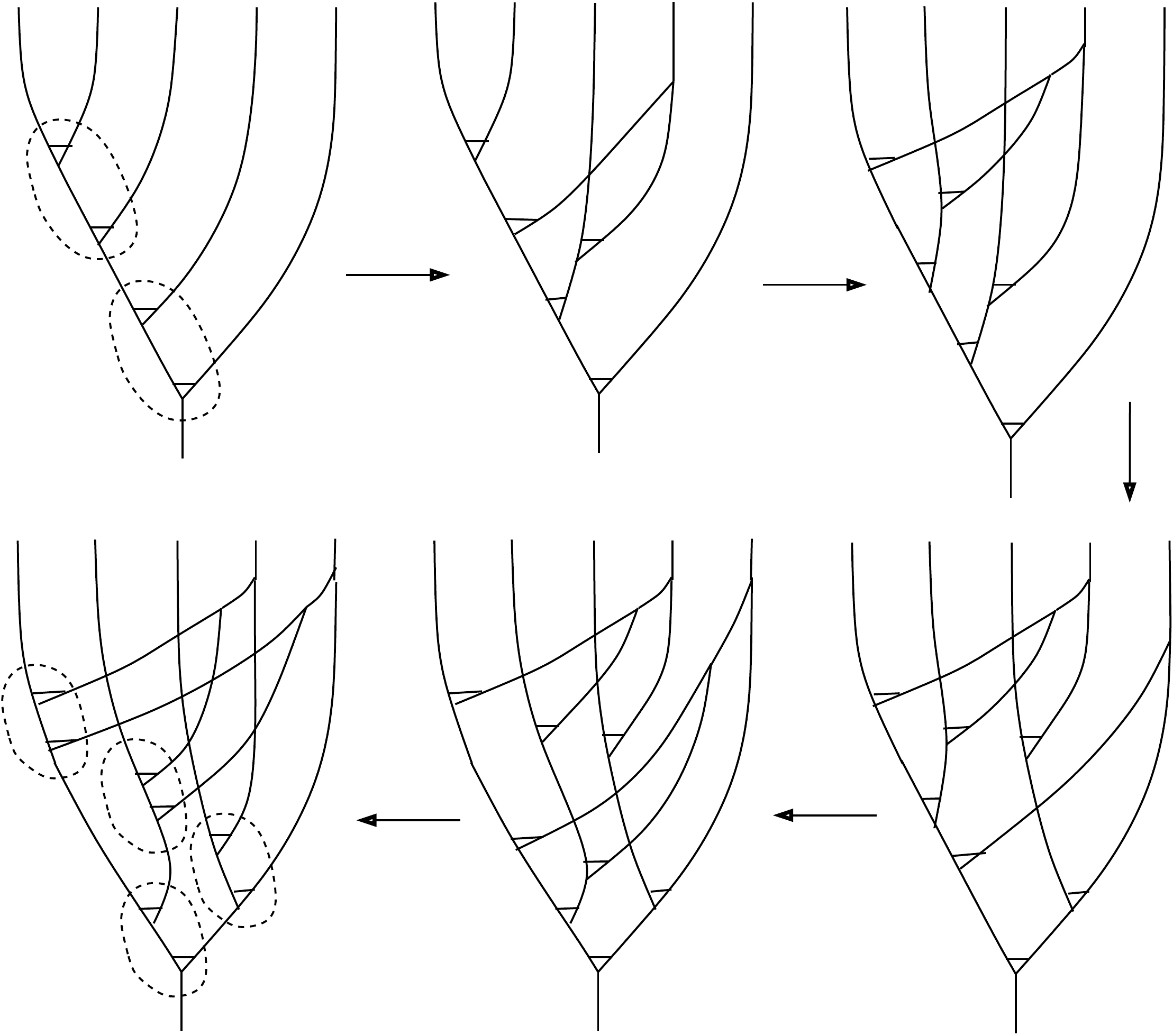}\\
		\caption{Diagrammatic proof of doubling procedure}\label{fig:qq}
	\end{center}
\end{figure}

The following is a rephrased version of Lemma 3.3 in \cite{CCES}, adapted to our language in the present article. 
\begin{lemma}\label{lem:CCES}
	Let $L$ be a Lie algebra over a ground field $\mathbbm {k}$. Define $X = \mathbbm{k}\oplus L$ and endow it with a comultiplication $\Delta$, defined by $(a,x) \mapsto (a,x)\otimes (1,0) + (1,0)\otimes (0,x)$, and a counit $\epsilon$, defined by $(a,x) \mapsto a$. Then $(X,\Delta , \epsilon)$ is a comonoid in the symmetric monoidal category of vector spaces. The morphism $q:X\otimes X \longrightarrow X$ defined by $(a,x)\otimes (b,y)\mapsto (ab, bx + [x,y])$ turns $X$ into a binary self-distributive object. 
\end{lemma}
\begin{proof}
	By direct computation making use of the Jacobi identity. This is done explicitly in Lemma 3.3 in \cite{CCES}. 
	\end{proof}
\begin{example}\label{Lie}
	\rm{
 Let $L$ be a Lie algebra and let $X = \mathbbm{k}\oplus L$ be as in Lemma \ref{lem:CCES}. The map $T: X\otimes X\otimes X \longrightarrow X$ defined by
 $$(a,x)\otimes (b,y)\otimes (c,z) \longmapsto (abc, bcx + c[x,y] + b[x,z] + [[x,y],z]),$$
  and extended by linearity, is such that $(X,T)$ is a tenrary self-distributive object in the category of vector spaces by an easy application of Theorem \ref{co-dbl}. An explicit, and tedious, computation that shows the self-distributivity of $T$ directly, is postponed to Appendix \ref{Ap:Lie}. 
}
\end{example} 

If $H$ is a Hopf algebra, we can use the adjoint map to produce a ternary self distributive map, as the following example shows:
\begin{example}
	\rm{
 The map defined by $T(x\otimes y\otimes z) = S(z^{(1)})S(y^{(1)})xy^{(2)}z^{(2)}$  is ternary self-distributive, as an easy direct computation shows. This is the Hopf algebra analogue of the iterated conjugation quandle.
}
\end{example}

\begin{remark}
	\rm{
	It is possible, a priori, to develop the theory of higher self-distributivity in braided monoidal categories, where the switching morphism satisfies the hexagon axiom but we do not require $\tau_{Y,X}\tau_{X,Y} = \mathbbm{1}_{X\boxtimes Y}$. Similarly as above we have an action of the braid group on $n$ strings on every object $X^{\boxtimes n}$ and the shuffle map $\shuffle_n$ takes now into account over passing and under passing of the strings. }
\end{remark}

%

\appendix  

\section{Example \ref{Lie} revisited}\label{Ap:Lie}

In this appendix we explicitly show that the map in Example \ref{Lie} is indeed self-distributive. Each equality is obtained by applying the Jacobi identity as in the proof of Lemma 3.3 in \cite{CCES}. In fact, each step corresponds to one of the diagrams in the proof of Theorem \ref{co-dbl} (cf. figure \ref{fig:qq}). Recall also the definition of the diagonal $\Delta$, from Lemma \ref{lem:CCES}, and the inductive definition for $\Delta_3$ at the beginning of Section \ref{Inter}. Explicitly, we have for $\Delta_3$:
$$
\Delta_3 (a,x) = (a,x)\otimes (1,0)\otimes (1,0) + (1,0)\otimes (0,x)\otimes (1,0) + (1,0)\otimes (1,0)\otimes (0,x).
$$
To make the steps easier for the reader, we declare the terms that are going to be replaced according to the Jacobi identity, and underline the replacing terms in the subsequent equality. We obtain therefore: 
\begin{eqnarray*}
	\lefteqn{T(T((a,x)\otimes (b_0,y_0)\otimes (b_1,y_1))\otimes (c_0,z_0) \otimes (c_1,z_1))}\\
	&=& (ab_0b_1c_0c_1, b_0b_1c_0c_1 x + b_1c_0c_1 [x,y_0] + b_0c_0c_1 [x,y_1]\\ 
	&& + c_0c_1 [[x,y_0],y_1] + b_0b_1c_1 [x,z_0] + b_1c_1 [[x,y_0],z_0] \\
	&& + b_0c_1[[x,y_1],z_0] + c_1[[[x,y_0],y_1],z_0] + b_0b_1c_0 [x,z_1]\\
	&& + b_1c_0 [[x,y_0],z_1] + b_0c_0[[x,y_1],z_1] + c_0[[[x,y_0],y_1],z_1]\\
	&& + b_0b_1[[x,z_0],z_1] + b_1[[[x,y_0],z_0],z_1] + b_0[[[x,y_1],z_0],z_1]\\
	&& + [[[[x,y_0],y_1],z_0],z_1]).
\end{eqnarray*}
\noindent Applying the Jacobi identity to the terms $b_0c_1[[x,y_1],z_0]$, $c_1[[[x,y_0],y_1],z_0]$, $b_0[[[x,y_1],z_0],z_1]$ and $[[[[x,y_0],y_1],z_0],z_1]$ we obtain:
\begin{eqnarray*}
	&=& (ab_0b_1c_0c_1, b_0b_1c_0c_1x + b_1c_0c_1[x,y_0] + b_0b_1c_1[x,z_0]\\
	&& + b_1c_1[[x,y_0],z_0] + b_0c_0c_1[x,y_1] + c_0c_1[[x,y_0],y_1]\\
	&& + \underline{b_0c_1[[x,z_0],y_1]} + \underline{c_1[[[x,y_0],z_0],y_1]} + \underline{b_0c_1[x,[y_1,z_0]]}\\
	&& + \underline{c_1[[x,y_0],[y_1,z_0]]}  + b_0b_1c_0[x,z_1] + b_1c_0[[x,y_0],z_1]\\
	&& + b_0b_1[[x,z_0],z_1] + b_1[[[x,y_0],z_0],z_1]+ b_0c_0[[x,y_1],z_1]\\
	&& + c_0[[[x,y_0],y_1],z_1] + \underline{b_0[[[x,z_0],y_1],z_1]} + \underline{[[[[x,y_0],z_0],y_1],z_1]}\\  
	&& + \underline{b_0[[x,[y_1,z_0]],z_1]} + \underline{[[[x,y_0],[y_1,z_0]],z_1]}).
\end{eqnarray*}
We now apply the Jacoby identity to the term $b_1c_1[[x,y_0],z_0]$, $b_1[[[x,y_0],z_0],z_1]$, $c_1[[[x,y_0],z_0],y_1]$ and $[[[[x,y_0],z_0],y_1],z_1]$ to obtain:
\begin{eqnarray*}
	&=& (ab_0b_1c_0c_1,  b_0b_1c_0c_1x + b_0b_1c_1[x,z_0] + b_1c_0c_1[x,y_0]\\
	&& + \underline{b_1c_1[[x,z_0],y_0]} + b_0c_0c_1 [x,y_1] + b_0c_1[[x,z_0],y_1]\\
	&& + c_0c_1[[x,y_0],y_1]] + \underline{c_1[[[x,z_0],y_0],y_1]} + \underline{b_1c_1[x,[y_0,z_0]]}\\
	&& + \underline{c_1[[x,[y_0,z_0]],y_1]} + b_0c_1[x,[y_1,z_0]] + c_1[[x,y_0],[y_1,z_0]]\\
	&& + b_0b_1c_0[x,z_1] + b_0b_1[[x,z_0],z_1] + b_1c_0[[x,y_0],z_1]\\
	&& + \underline{b_1[[[x,z_0],y_0],z_1]} + b_0c_0 [[x,y_1],z_1] + b_0[[[x,z_0],y_1],z_1]\\
	&& + c_0[[[x,y_0],y_1],z_1] + \underline{[[[[x,z_0],y_0],y_1],z_1]} + \underline{b_1[[x,[y_0,z_0]],z_1]}\\
	&& + \underline{[[[x,[y_0,z_0]],y_1],z_1]} + b_0[[x,[y_1,z_0]],z_1] + [[[x,y_0],[y_1,z_0]],z_1] ).
\end{eqnarray*}
Next, we use the Jacoby identity on the terms $b_0c_0[[x,y_1],z_1]$, $b_0[[[x,z_0],y_1],z_1]$, $b_0[[x,[y_1,z_0]],z_1]$, $c_0[[[x,y_0],y_1],z_1]$, $[[[[x,z_0],y_0],y_1],z_1]$, $[[[x,[y_0,z_0]],y_1],z_1]$ and $[[[x,y_0],[y_1,z_0]],z_1]$. 
\begin{eqnarray*}
	&=&(ab_0b_1c_0c_1, b_0b_1c_0c_1x + b_0b_1c_1[x,z_0] + b_1c_0c_1[x,y_0]\\
	&& + b_1c_1[[x,z_0],y_0] + b_0b_1c_0[x,z_1] + b_0b_1[[x,z_0],z_1]\\
	&& + b_1c_0 [[x,y_0],z_1] + b_1[[[x,z_0],y_0],z_1] + b_0c_0c_1[x,y_1]\\
	&& + b_0c_1[[x,z_0],y_1] + c_0c_1[[x,y_0],y_1] + c_1[[x,[y_0,z_0]],y_1]\\
	&& + b_1c_1[x,[y_0,z_0]] + \underline{b_0c_0[[x,z_1],y_1]} + \underline{b_0[[[x,z_0],z_1],y_1]}\\
	&& + \underline{c_0[[[x,y_0],z_1],y_1]} + b_1[[x,[y_0,z_0]],z_1] + \underline{[[[[x,z_0],y_0],z_1],y_1]}\\
	&& + \underline{[[[x,[y_0,z_0]],z_1],y_1]} + b_0c_1[x,[y_1,z_0]] + c_1[[x,y_0],[y_1,z_0]]\\
	&& + \underline{b_0[[x,z_1],[y_1,z_0]]} + \underline{[[[x,y_0],z_1],[y_1,z_0]]} +\underline{ b_0c_0[x,[y_1,z_1]]}\\
	&& + \underline{b_0[[x,z_0],[y_1,z_1]]} + c_1[[[x,z_0],y_0],y_1] + \underline{c_0[[x,y_0],[y_1,z_1]]}\\ &&+\underline{[[[x,z_0],y_0],[y_1,z_1]]} + \underline{[[x,[y_0,z_0]],[y_1,z_1]]} + \underline{b_0[x,[[y_1,z_0],z_1]]}\\
	&& + \underline{[[x,y_0],[[y_1,z_0],z_1]]}),
\end{eqnarray*}
\begin{sloppypar}\noindent
	Lastly, making use of the Jacobi identity on the terms $b_1c_0 [[x,y_0],z_1]$, $b_1[[[x,z_0],y_0],z_1]$, $c_0[[[x,y_0],z_1],y_1]$, $ b_1[[x,[y_0,z_0]],z_1]$, $ [[[[x,z_0],y_0],z_1],y_1]$,  $[[[x,[y_0,z_0]],z_1],y_1]$ and $[[[x,y_0],z_1],[y_1,z_0]]$ we obtain:
	\end{sloppypar}
\begin{eqnarray*}
	&=& (ab_0b_1c_0c_1, b_0b_1c_0c_1 x + b_0b_1c_1 [x,z_0] + b_0b_1c_0[x,z_1]\\
	&& + b_0b_1[[x,z_0],z_1] + b_1c_0c_1[x,y_0] + b_1c_1[[x,z_0],y_0] \\
	&& +\underline{ b_1c_0[[x,z_1],y_0]} + \underline{b_1[[[x,z_0],z_1],y_0]} + b_0c_0c_1 [x,y_1]\\
	&& + b_0c_1 [[x,z_0],y_1] + b_0c_0[[x,z_1],y_1] + b_0[[[x,z_0],z_1],y_1]\\
	&& + c_0c_1[[x,y_0],y_1] + c_1[[[x,z_0],y_0],y_1] + \underline{c_0[[[x,z_1],y_0],y_1]}\\
	&& + \underline{[[[[x,z_0],z_1],y_0],y_1]} + b_0c_1[x,[y_1,z_0]]  + b_0[[x,z_1],[y_1,z_0]]\\
	&& + c_1[[x,y_0],[y_1,z_0]] + \underline{[[[x,z_1],y_0],[y_1,z_0]]} + b_1c_1[x,[y_0,z_0]]\\
	&& + \underline{b_1[[x,z_1],[y_0,z_0]]}+ c_1[[x,[y_0,z_0]],y_1] + \underline{[[[x,z_1],[y_0,z_0]],y_1]}\\
	&& + \underline{b_1c_0[x,[y_0,z_1]]} +\underline{b_1[[x,z_0],[y_0,z_1]]} +  \underline{c_0[[x,[y_0,z_1]],y_1]}\\
	&& + \underline{[[[x,z_0],[y_0,z_1]],y_1]} + \underline{b_1[x,[[y_0,z_0],z_1]]} + \underline{[[x,[[y_0,z_0],z_1]],y_1]}\\ 
	&& + \underline{[[x,[y_0,z_1]],[y_1,z_0]]}+ b_0c_0[x,[y_1,z_1]] + b_0[[x,z_0],[y_1,z_1]]\\
	&& + c_0[[x,y_0],[y_1,z_1]] + [[[x,z_0],y_0],[y_1,z_1]] + [[x,[y_0,z_0]],[y_1,z_1]]\\
	&& + b_0[x,[[y_1,z_0],z_1]] + [[x,y_0],[[y_1,z_0],z_1]]).
\end{eqnarray*}
This last term can be seen to coincide with the right-hand side of the self-distributivity equation:
$$
T(T^{\otimes 3})\shuffle_3 (\mathbbm{1}^3\otimes \Delta_3^{\otimes 2})((a,x)\otimes (b_0,y_0)\otimes (b_1,y_1))\otimes (c_0,z_0) \otimes (c_1,z_1)).
$$
It follows therefore, that the map $T$ turns $X$ into a ternary self-distributive object in the category of vector spaces. \\

\section{Augmented ternary racks for sets and Hopf algebras}\label{augmented}

It is of an independent interest how the concept of augmented racks generalize to ternary racks
for both sets and monoidal categories in general.
In this section we propose such  generalization
and provide key motivational examples in heaps and Hopf algebras.

An {\it augmented rack}~\cite{FR} $(X, G)$ is a set $X$ with a right group action by  a group $G$ 
and a map $p: X \rightarrow G$ satisfying the identity
$p (x \cdot g) = g^{-1} p(x) g$ for all $x \in X$, $g \in G$.
An augmented rack has a rack operation defined by $x*y=x \cdot  p(y)$ for $x, y \in X$.
The following definition can be considered a ternary analogue of an augmented rack \cite{FR}.

\begin{definition}
	{\rm
		Let $X$ be a set  with a right $G$-action 
		denoted by $X \times G \ni (x, g) \mapsto x \cdot g \in  X$.
		Let $G$ act on the right of $X \times X$ diagonally, 
		$(y_0, y_1) \cdot g=(y_0 \cdot g, y_1 \cdot g)$ for $y_0, y_1 \in X$ and $g \in G$.
		An {\it (double) augmentation} of $X$ is a map $p: X \times X \rightarrow G$ 
		satisfying the condition 
		$$ p(  (y_0, y_1)\cdot g ) = g^{-1} p( ( y_0, y_1 ) ) g $$
		for all $y_0, y_1 \in X$ and $g\in G$. 
	}
\end{definition}

The following is a direct analogue of binary augmented rack
and, therefore, the proof is omitted. 

\begin{lemma}\label{lem:ternaryaug}
	Let $X$ be a set with an augmentation
	$p: X \times X \rightarrow G$. 
	Then the ternary operation $T: X^3 \rightarrow X$ defined by 
	$$ T(x, y_0, y_1) := x \cdot p( ( y_0, y_1) ) $$
	is ternary self-distributive. 
\end{lemma}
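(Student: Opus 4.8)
The plan is to verify directly the ternary distributivity identity of Definition~\ref{ternarydef} for the operation $T(x,y_0,y_1)=x\cdot p((y_0,y_1))$, by expanding both sides and reducing each to a common expression using only associativity of the right $G$-action and the defining equivariance property of $p$. This parallels the proof for binary augmented racks in \cite{FR}, which is presumably why it is omitted; the point is to confirm that the augmentation condition on $p$ plays exactly the role it does in the binary case.

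First I would expand the left-hand side. Applying the definition of $T$ twice and then associativity of the action gives
$$T(T(x,y_0,y_1),z_0,z_1)=\big(x\cdot p((y_0,y_1))\big)\cdot p((z_0,z_1))=x\cdot\big(p((y_0,y_1))\,p((z_0,z_1))\big).$$

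Next I would expand the right-hand side. Writing $h=p((z_0,z_1))$, the inner terms are $T(x,z_0,z_1)=x\cdot h$ and $T(y_i,z_0,z_1)=y_i\cdot h$ for $i=0,1$. Since $G$ acts diagonally on $X\times X$, the pair $(y_0\cdot h,\ y_1\cdot h)$ equals $(y_0,y_1)\cdot h$, so
$$T\big(x\cdot h,\ y_0\cdot h,\ y_1\cdot h\big)=(x\cdot h)\cdot p\big((y_0,y_1)\cdot h\big).$$

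The key step is then to invoke the augmentation condition $p((y_0,y_1)\cdot h)=h^{-1}p((y_0,y_1))h$. Substituting and using associativity once more, the right-hand side becomes $(x\cdot h)\cdot\big(h^{-1}p((y_0,y_1))h\big)=x\cdot\big(p((y_0,y_1))\,h\big)$, which coincides with the reduced left-hand side. I expect no serious obstacle here: the only substantive ingredient is the equivariance of $p$ together with the fact that the action on $X\times X$ is diagonal, and the cancellation $h\,h^{-1}=e$ is precisely what forces the two sides to agree; everything else is bookkeeping with the action axioms. Note that only self-distributivity is claimed, so no invertibility of the maps $R_{a,b}$ needs to be addressed.
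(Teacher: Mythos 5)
Your computation is correct and is exactly the argument the paper has in mind: the paper omits the proof, stating only that it is the direct analogue of the binary augmented rack case, and your expansion of both sides using associativity of the action, the diagonal action on $X\times X$, and the equivariance $p((y_0,y_1)\cdot h)=h^{-1}p((y_0,y_1))h$ followed by the cancellation $h\,h^{-1}=e$ is precisely that analogue. Nothing is missing.
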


\begin{definition}
	{\rm
		Let $X$ be a set with an augmentation $p: X^2 \rightarrow G$ 
		and $T$ be a ternary operation defined  in Lemma~\ref{lem:ternaryaug}.
		Then $(X, T)$ is called an {\it augmented ternary shelf}. 
	}
	\end{definition}

\begin{example}
{\rm
Heaps can be endowed with augmentation as follows.
Let $G$ be a group, with the TSD operation $T(x,y,z)=xy^{-1}z$.
Consider the right multiplication as the right action of $G$ on itself.
Then consider the diagonal right action of $G$ on $G \times G$, by 
$(x, y) \cdot z= (x\cdot z, y \cdot z)$.
Let $p:   G \times G \rightarrow G $ be defined by $p(y, z)=y^{-1} z$.
Then we readily check the condition 
$$ p( ( y_0, y_1) \cdot g )= p ( ( y_0 \cdot g, y_1 \cdot g))=(y_0 g)^{-1} (y_1 g)= g^{-1} y_0^{-1} y_1 g = g^{-1} p((y_0, y_1)) g.$$
}
\end{example}

Although more study on augmented ternary racks are desirable, we focus on the following 
further generalization to Hopf algebras. The point of interest is that the comultiplication plays the role of the diagonal map.

\begin{definition}
	\rm{
Let $X$ be a coalgebra, and let $H$ be a Hopf algebra such that $X$ is a right $H$-module, therefore 
$X^{\otimes 2}$ is also a right $H$-module via the comultiplication in $H$. The map of coalgebras $p: X^{\otimes 2} \longrightarrow H$ is a ternary augmented shelf if, for all $z\in X^{\otimes 2}$ and $g\in H$, we have: 
\begin{center}
$p(z\cdot \Delta(g)) = S(g^{(1)})p(z)g^{(2)}$. 
\end{center}
}
	\end{definition}
This axiom is depicted diagrammatically in Figure \ref{fig:aug}, where solid lines refer to $X$, and dashed lines refer to $H$. We have used $\Delta$, $m$ and $S$ to indicate comultiplication, multiplication and antipode in the Hopf algebra $H$, while $\mu$ stands for the action of $H$ on $X$.
\begin{figure}[htb]
	\begin{center}
		\includegraphics[width=1.8in]{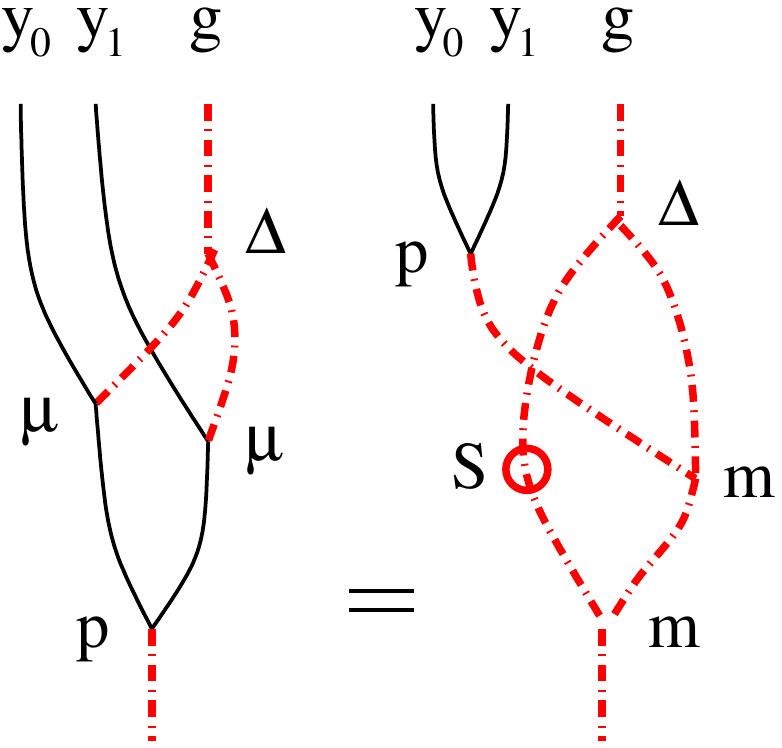}\\
			\caption{Augmented ternary shelf axiom}\label{fig:aug}
	\end{center}
\end{figure}

We have the following result:
\begin{theorem}
	Let $p:X^{\otimes 2} \longrightarrow H$ be a ternary augmented shelf. Then the ternary operation defined on monomials via $x\otimes y\otimes z \longmapsto x\cdot p(y\otimes z)$, and extended by linearity, is self-distributive.
\end{theorem}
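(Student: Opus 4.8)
The plan is to follow the direct-computation strategy of Example~\ref{qheap}, comparing the two sides of ternary self-distributivity as spelled out by Definition~\ref{def:n-arysd} in the case $n=3$. Throughout I use the Sweedler convention $\Delta(x)=x^{(1)}\otimes x^{(2)}$, write $x\cdot h$ for the $H$-action on $X$, and recall that $T(x\otimes y\otimes z)=x\cdot p(y\otimes z)$ and that $X^{\otimes 2}$ is an $H$-module via the comultiplication of $H$, so that $(y\otimes z)\cdot g=(y\cdot g^{(1)})\otimes(z\cdot g^{(2)})$.

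First I would compute the left-hand side $T\circ(T\boxtimes\mathbbm{1}^{\otimes 2})$, which involves no diagonal maps. Since the action is associative, $T(T(x\otimes y\otimes z)\otimes u\otimes v)=(x\cdot p(y\otimes z))\cdot p(u\otimes v)=x\cdot\bigl(p(y\otimes z)\,p(u\otimes v)\bigr)$; this is the expression I must reproduce.

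Next I would expand the right-hand side $T\circ T^{\otimes 3}\circ\shuffle_3\circ(\mathbbm{1}^{\otimes 3}\otimes\Delta_3^{\otimes 2})$. Applying $\Delta_3$ to $u$ and $v$ and shuffling sends the input into the three triples $x\otimes u^{(1)}\otimes v^{(1)}$, $y\otimes u^{(2)}\otimes v^{(2)}$, $z\otimes u^{(3)}\otimes v^{(3)}$, on which $T^{\otimes 3}$ produces $A=x\cdot p(u^{(1)}\otimes v^{(1)})$, $B=y\cdot p(u^{(2)}\otimes v^{(2)})$, $C=z\cdot p(u^{(3)}\otimes v^{(3)})$, so the right-hand side equals $A\cdot p(B\otimes C)$. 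The key step is to recognize $B\otimes C$ as a single module action on $y\otimes z$: since $p$ is a morphism of coalgebras and the coproduct on $X^{\otimes 2}$ is the symmetric tensor coproduct, coassociativity gives $p(u^{(2)}\otimes v^{(2)})\otimes p(u^{(3)}\otimes v^{(3)})=\Delta(g)$ for the single element $g=p(\tilde u\otimes\tilde v)$ arising from the second slot of the two-fold coproducts of $u$ and $v$. Hence $B\otimes C=(y\otimes z)\cdot g$, which is exactly the shape to which the augmented-shelf axiom applies, giving $p(B\otimes C)=S(g^{(1)})\,p(y\otimes z)\,g^{(2)}$.

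Finally I would collapse the Hopf-algebraic bookkeeping. Writing $P=p(u\otimes v)$, the coalgebra-morphism property of $p$ identifies $p(u^{(1)}\otimes v^{(1)})\otimes g=\Delta(P)$, and one further application of coassociativity turns the right-hand side into $x\cdot\bigl(P^{(1)}S(P^{(2)})\,p(y\otimes z)\,P^{(3)}\bigr)$. The antipode axiom $P^{(1)}S(P^{(2)})=\epsilon(P^{(1)})\,1$ followed by the counit axiom $\epsilon(P^{(1)})P^{(2)}=P$ then reduce this to $x\cdot\bigl(p(y\otimes z)\,p(u\otimes v)\bigr)$, matching the left-hand side. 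I expect the only real obstacle to be notational discipline: keeping the Sweedler indices of $u$, $v$, $g$, and $P$ correctly aligned through the shuffle, the coalgebra-map identity, and the two nested coassociativity steps, so that the augmentation axiom and then the antipode identity are each applied with the proper grouping. A diagrammatic argument in the spirit of Figure~\ref{fig:aug} yields the same result and serves as a useful check.
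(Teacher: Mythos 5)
Your proposal is correct and follows essentially the same route as the paper's proof: expand the right-hand side via $\Delta_3$ and the shuffle, use the coalgebra-morphism property of $p$ to rewrite the last two factors as a single module action $(y\otimes z)\cdot g$, apply the augmented-shelf axiom, and then collapse with the antipode and counit axioms to recover $x\cdot\bigl(p(y\otimes z)\,p(u\otimes v)\bigr)$. The Sweedler bookkeeping you flag as the main hazard is exactly what the paper's displayed computation carries out.
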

\begin{proof}
	By direct computation we have, for the right hand side of self-distributivity axiom: 
		\begin{eqnarray*}
		\lefteqn{TT^{\otimes 3} \shuffle_3 (\mathbbm{1}^{\otimes 3}\otimes (\Delta\otimes \mathbbm{1})\Delta \otimes (\Delta\otimes \mathbbm{1})\Delta)(x\otimes y_0 \otimes y_1 \otimes z_0 \otimes z_1)}\\
		&=& T(x\cdot p(z_0^{(1)}\otimes z_1^{(1)})\otimes y_0\cdot p(z_0^{(2)}\otimes z_1^{(2)})\otimes y_1\cdot p(z_0^{(3)}\otimes z_1^{(3)}))\\
		&=& x\cdot (p(z_0^{(1)}\otimes z_1^{(1)})p(y_0\cdot p(z_0^{(2)}\otimes z_1^{(2)})\otimes y_1\cdot p(z_0^{(3)}\otimes z_1^{(3)})))\\
		&=& x\cdot (p(z_0^{(1)}\otimes z_1^{(1)})(y_0\otimes y_1\cdot \Delta p((z_0\otimes z_1)^{(2)})))\\
		&=& x\cdot (p(z_0\otimes z_1)^{(1)} S(p((z_0\otimes z_1)^{(2)}))p(y_0\otimes y_1)p((z_0\otimes z_1))^{(3)})\\
		&=& x\cdot (\epsilon(p(z_0\otimes z_1)^{(1)})\cdot 1 p(y_0\otimes y_1)  p((z_0\otimes z_1))^{(2)})\\
		&=& x\cdot (p(y_0\otimes y_1)p(z_0\otimes z_1 )),
	\end{eqnarray*}
    where we have used the fact that $p$ is a coalgebra morphism in the third equality, the defining axiom for augmented ternary shelf in the fourth equality, the antipode and the counit axioms to obtain the fifth and sixth equations respectively. It is easy to see that it coincide with the left hand side of self-distributivity.  
	\end{proof}
\begin{example}
	\rm{
  	Let $H$ be an involutive Hopf algebra and let $X = H$. Then, $H$ acts on $X$ via multiplication. Define $p$ to be the map given by $x\otimes y \longmapsto S(x)y$ and extended by linearity. The ternary rack structure obtained is the one in Example \ref{qheap}. A diagrammatic proof that the given $p$ satisfies the augmented ternary rack axiom is shown in Figure \ref{fig:heapaug}.}
  	\end{example}
	
  \begin{figure}[htb]
  	\begin{center}
  		\includegraphics[width=4in]{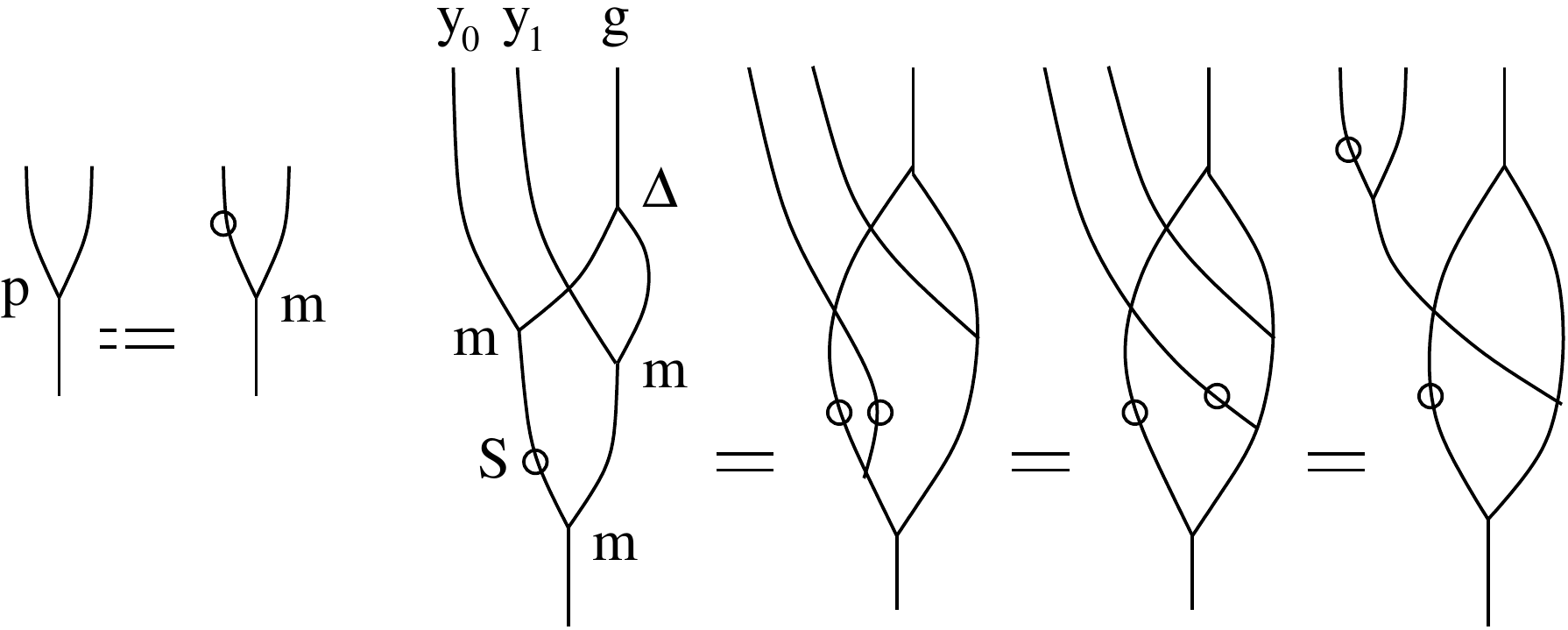}\\
  		\caption{Hopf algebra heap as an augmented ternary shelf}\label{fig:heapaug}
  	\end{center}
  \end{figure}


 \noindent
 {\bf Acknowledgment.}
 M.S. was  supported in part  by NIH R01GM109459 and  NSF DMS-1800443.

%

\end{document}